\newtheorem{Theorem}{Theorem}
\newtheorem{Exa}{Example}
\newtheorem*{thm}{Theorem}
\author[Oussama Hamza]{Oussama Hamza}
\address{Department of Mathematics\\ Western University\\ London\\ Ontario\\ Canada N6A5B7}
\email{ohamza3@uwo.ca}
\title{Zassenhaus and lower central filtrations of Pro-$p$ groups considered as modules}
\subjclass{20F05, 16W70, 20F14, 20F40, 20F69}
\keywords{Action on pro-$p$ groups, Zassenhaus and lower central filtrations, graded and filtered Lie algebras, Hilbert series, mild groups}
\thanks{I first praise Christian Maire for several careful readings, corrections, improvement and inspiration. I also acknowledge Simion Filip, John Labute, J{\'a}n Min{\'a}{\v c}, Nguyên Duy Tân and Thomas Weigel for discussions, suggestions and references. I thank Elyes Boughattas, Baptiste Cerclé and Michael Rogelstad for useful comments. Finally, I am grateful to the anonymous referee for provided comments.}
\newcommand{\Q}{\mathbb{Q}}
\newcommand{\F}{\mathbb{F}}
\newcommand{\Z}{\mathbb{Z}}
\newcommand{\NN}{\mathbb{N}}
\newcommand{\C}{\mathbb{C}}
\def\p{{\mathfrak p}}
\def\rk{{\rm rank}}
\def\log{{\rm log}}
\def\grad{{\rm Grad}}
\def\Gal{{\rm Gal}}
\def\cd{{\rm cd}}
\def\AA{{\mathbb A}}
\def\E{{\mathcal E}}
\def\I{{\mathcal I}}
\def\Ff{{\mathcal F}}
\def\Ll{{\mathcal L}}
\def\K{{\rm K}}
\def\k{{\rm k}}
\def\p{{\mathfrak p}}
\begin{document}

\begin{abstract}
The goal of this paper is to study the action of groups on Zassenhaus and lower central filtrations of finitely generated pro-$p$ groups. We shall focus on the semisimple case. Particular attention is given to finitely presented groups of cohomological dimension less than or equal to two.
\end{abstract}

\maketitle

\section*{Introduction}

\subsection*{Context}
Let~$G$ be a finitely generated pro-$p$ group, and denote by~$\AA$ the ring~$\Z_p$ or~$\F_p$. From~$\AA$, we recover some filtrations on~$G$. Introduce~$Al(\AA, G):={\varprojlim} {\AA}\lbrack G/U \rbrack$, where~$U$ is an open normal subgroup of~$G$, the completed group algebra of~$G$ over~$\AA$. Since~$\AA\lbrack G/U\rbrack$ is an augmented algebra over~$\AA$, then~$Al(\AA,G)$ is also. Consequently, we denote by~$Al_n(\AA, G)$ the~$n$-th power of the augmentation ideal of~$Al(\AA, G)$. Define:
$$G_n(\AA):=\{g\in G; g-1\in Al_n(\AA,G)\},$$
this is a filtration of~$G$.

Observe that~$\{G_n(\F_p)\}_{n\in \NN}$ denotes the Zassenhaus filtration of~$G$ (see for instance \cite{Minac}), and is an open characteristic basis of~$G$. Similarly, under certain conditions (see \cite{Labutecentralseries}), the filtration~$\{G_n(\Z_p)\}_{n\in \NN}$ is equal to the lower central series of~$G$, i.e.~$G_1(\Z_p)=G$ and~$G_{n+1}(\Z_p)=\lbrack G_n(\Z_p);G\rbrack$. When the context is clear, we omit to write~$\AA$ for filtrations (and future associated invariants). Our goal is to study the following Lie algebras:
\begin{equation*}
\begin{aligned}
\Ll(\AA,G):=\bigoplus_{n\in \NN} \Ll_n(\AA,G), \quad \text{where} \quad \Ll_n(\AA,G):=G_n(\AA)/G_{n+1}(\AA), \quad \text{and} 
\\ \E(\AA,G):=\bigoplus_{n\in \NN} \E_n(\AA,G), \quad \text{where} \quad \E_n(\AA, G):=Al_n(\AA,G)/Al_{n+1}(\AA,G).
\end{aligned}
\end{equation*}

We always assume that~$\Ll(\AA,G)$ is \textbf{torsion-free} over~$\AA$. Notice that this condition is automatically satisfied when~$\AA:=\F_p$, contrary to the case~$\AA:=\Z_p$ (see for instance \cite[Theorem]{LABUTE197016} and \cite[Th{\'e}or{\`e}me $2$]{labute1967algebres}). Since~$G$ is finitely generated, one defines for every integer~$n$:
\begin{equation*}
\begin{aligned}
a_n(\AA):=\rk_{\AA}\Ll_n(\AA,G), \quad \text{and} \quad c_n(\AA):=\rk_{\AA} \E_n(\AA,G),
\\ gocha(\AA, t):=\sum_{n\in \NN} c_nt^n.
\end{aligned} 
\end{equation*}

The series~$gocha(\F_p,t)$ was first introduced by Golod and Shafarevich in \cite{Gold}. It allowed them to obtain information on class field towers over some fields (see for instance \cite[Chapter IX]{CA}). Later in~$1965$, Lazard studied analytic pro-$p$ groups in \cite{LAZ}, i.e. Lie groups over~${\Q_p}$ (see \cite[Définition~$3.1.2$]{LAZ}). Labute \cite{Labute}, also used the series~$gocha(\Z_p,t)$ in order to study mild groups and their related properties.

Jennings, Lazard and Labute gave an explicit relation between~$gocha$ and~$(a_n)_{n\in \NN}$ (\cite[Proposition~$3.10$, Appendice A]{LAZ}, and  \cite[Lemma~$2.10$]{Minac}):
\begin{equation}\label{JeLa}
\begin{aligned}
& gocha(\AA,t)=\prod_{n\in \NN} P(\AA,t^n)^{a_n(\AA)},&
\\ \text{where} \quad & P(\F_p,t):=\left(\frac{1-t^{p}}{1-t}\right), \quad \text{and} & \quad   P(\Z_p,t):=\left(\frac{1}{1-t}\right).
\end{aligned}
\end{equation}

From Formula \eqref{JeLa}, Lazard deduced an alternative for the growth of~$(c_n(\F_p))_{n\in \NN}$ (for general references, see \cite[Part~$12.3$]{DDMS}), this is \cite[Théorème~$3.11$, Appendice A.$3$]{LAZ}:

\begin{thm}[Alternative des Gocha]
We have the following alternative:
\begin{itemize}
\item[$\bullet$]
Either~$G$ is an analytic pro-$p$ group, so there exists an integer~$n$ such that~$a_n(\F_p)=0$ and the sequence~$(c_n(\F_p))_{n\in \NN}$ has polynomial growth with~$n$.
\item[$\bullet$] Or~$G$ is not an analytic pro-$p$ group, then for every~$n\in \NN$,~$a_n(\F_p)\neq 0$, and the sequence~$(c_n(\F_p))_{n\in \NN}$ does admit an exponential growth with~$n$ (i.e. grows faster than any polynomial in~$n$). 
\end{itemize}
\end{thm}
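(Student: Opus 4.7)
The plan is to exploit formula \eqref{JeLa} applied to $\AA = \F_p$,
\begin{equation*}
gocha(\F_p, t) \;=\; \prod_{n \geq 1} \bigl(1 + t^n + t^{2n} + \cdots + t^{(p-1)n}\bigr)^{a_n(\F_p)},
\end{equation*}
combined with Lazard's structural description of analytic pro-$p$ groups. Since the dichotomy is exclusive and exhaustive, the theorem reduces to two implications: (i) if $a_n(\F_p) \neq 0$ for every $n$, then $c_n(\F_p)$ grows faster than any polynomial; (ii) if $G$ is analytic, then some $a_n(\F_p) = 0$ and $c_n(\F_p)$ is polynomially bounded.

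For (i), I would argue by coefficient comparison. Since every factor in the product is bounded below by $1 + t^n + t^{2n} + \cdots + t^{(p-1)n}$ itself, one has coefficient by coefficient in $\NN[[t]]$
\begin{equation*}
gocha(\F_p, t) \;\geq\; \prod_{n \geq 1} \bigl(1 + t^n + t^{2n} + \cdots + t^{(p-1)n}\bigr).
\end{equation*}
The coefficient of $t^n$ on the right counts partitions of $n$ in which no part is repeated more than $p - 1$ times, and a classical Hardy--Ramanujan asymptotic shows that this partition function grows at least like $\exp(C \sqrt{n})$ for some $C > 0$. In particular, $c_n(\F_p)$ exceeds every polynomial in $n$.

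For (ii), I would invoke Lazard's theorem: every analytic pro-$p$ group contains an open uniformly powerful subgroup $H$ of some finite dimension $d$. For such $H$, the completed group algebra $Al(\F_p, H)$ is isomorphic to $\F_p[[X_1, \dots, X_d]]$ as a filtered algebra, so the associated graded $\E(\F_p, H)$ is the polynomial ring $\F_p[X_1, \dots, X_d]$ with each $X_i$ in degree $1$. Hence $c_n(H) = \binom{n + d - 1}{d - 1}$, a polynomial of degree $d - 1$ in $n$. Since $H$ is open in $G$, the algebra $Al(\F_p, G)$ is a finitely generated $Al(\F_p, H)$-module, and comparing Hilbert series transfers polynomial growth from $c_n(H)$ to $c_n(G)$. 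Pulling this information back through \eqref{JeLa} forces $a_n(H) = 0$ for every $n$ that is not a power of $p$, and a parallel comparison yields some $n_0$ with $a_{n_0}(\F_p) = 0$ for $G$ itself.

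The main obstacle is concentrated in implication (ii): everything there rests on Lazard's deep characterization of analytic pro-$p$ groups via uniformly powerful subgroups, and on the identification of their completed group algebras with formal power series rings. Once this structural input is in place, formula \eqref{JeLa} handles both sides of the dichotomy mechanically, while the elementary coefficient-wise estimate in (i) is a soft partition-counting argument.
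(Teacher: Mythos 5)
The paper does not actually prove this theorem; it quotes it directly from Lazard (\cite[Th\'eor\`eme~3.11, Appendice A.3]{LAZ}) with a pointer to \cite[Part~12.3]{DDMS} for background. So there is no ``paper's proof'' to compare against, and your attempt should be judged on its own logical completeness.

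Your reduction to (i) and (ii) is not exhaustive, and the hole is exactly where the depth of the theorem lives. Implication (i) gives $\bigl(\forall n:\,a_n(\F_p)\neq 0\bigr)\Rightarrow$ superpolynomial growth, and (ii) gives $G$ analytic $\Rightarrow$ polynomial growth; combined with the contrapositive of (i), you correctly obtain $G$ analytic $\Rightarrow\exists n:\,a_n(\F_p)=0$. But the second bullet of the theorem also asserts the converse: $G$ \emph{not} analytic $\Rightarrow\forall n:\,a_n(\F_p)\neq 0$, equivalently $\exists n:\,a_n(\F_p)=0\Rightarrow G$ analytic. Neither (i) nor (ii) touches this. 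It is not a formality: having one vanishing $a_n$ does \emph{not} force the Zassenhaus filtration to stabilize (for $G=\Z_p$, $a_n=0$ for every $n$ that is not a power of $p$, yet $G_n\neq G_{n+1}$ infinitely often), so one cannot argue that $\Ll(\F_p,G)$ is finite-dimensional merely from a single gap in the grading. Showing that a gap nonetheless implies analyticity --- via the multiplicative structure of the restricted Lie algebra, or via Lazard's criterion in terms of the growth of $\dim H/H^p[H,H]$ over open $H$ --- is the hard direction of Lazard's theorem and needs its own argument. As written, you prove only ``analytic $\Rightarrow$ polynomial growth and some $a_n=0$'' and ``all $a_n\neq 0\Rightarrow$ exponential growth''; these two are compatible with a hypothetical non-analytic $G$ with some $a_{n_0}=0$ and intermediate (neither polynomial nor matching your lower bound) growth, which the theorem excludes. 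The rest is sound: the coefficientwise bound $gocha(\F_p,t)\geq\prod_{n}(1+t^n+\cdots+t^{(p-1)n})$ and the Hardy--Ramanujan-type lower bound for restricted partitions are fine, and the computation of $c_n(H)=\binom{n+d-1}{d-1}$ for a uniform open $H$, followed by transfer of polynomial growth to $G$ via finite generation of $Al(\F_p,G)$ over $Al(\F_p,H)$, is a correct (if slightly compressed) route for (ii).
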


In~$2016$, Min{\'{a}}{\v{c}}, Rogelstad and Tân \cite{Minac} improved Formula \eqref{JeLa}: they gave an explicit relation between the sequences~$(a_n)_{n\in \NN}$ and~$(c_n)_{n\in \NN}$. The main idea was to introduce the coefficients~$b_n\in \Q$, namely defined by:
\begin{equation*}
\log(gocha(\AA,t)):=\sum_{n\in \NN}b_n(\AA) t^n.
\end{equation*}
They obtained the following formula (\cite[Theorem $2.9$ and Lemma $2.10$]{Minac}): if we write~$n=mp^k$, with~$m$ coprime to~$p$, then
\begin{equation}\label{minfor}
\begin{aligned}
a_n(\F_p)=w_m(\F_p)+w_{mp}(\F_p)+\dots+w_{mp^k}(\F_p),  \quad a_n(\Z_p)=w_n(\Z_p);
\\ \text{ where } w_n(\AA):=\frac{1}{n}\sum_{m|n} \mu(n/m)mb_m(\AA) \quad \text{and} \quad \mu \text{ is the Möbius function}.
\end{aligned}
\end{equation}

Notice that the number~$w_n(\F_p)$ (resp.~$c_n(\Z_p), a_n(\Z_p)$) is denoted by~$w_n(G)$ (resp.~$d_n(G)$,~$e_n(G)$) in~\cite[Part $2$]{Minac}. Furthermore, Min{\'a}{\v c}, Rogelstad and Tân asked the following question,~\cite[Question~$2.13$]{Minac}:
\begin{quote}
\emph{Do we have~$c_n(\F_p):=c_n(\Z_p)$?}
\end{quote} 
\medskip

Theorem \ref{MRTsol} answers this question positively when~$G$ is finitely presented and~$\cd(G)\leq 2$. To proceed, we compute~$(c_n(\AA))_{n\in \NN}$ by the Lyndon resolution (see \cite[Corollay~$5.3$]{BRU}), and as a consequence, we infer an explicit formula for~$a_n(\AA)$ using Formula \eqref{minfor}. Weigel (\cite[Theorem D]{weigel2015graded}) also gave a different formula from \eqref{minfor}, involving~$a_n(\Z_p)$ and roots of~$1/gocha(\Z_p,t)$.

\subsection*{Statement of main results}
The goal of this paper is to extend equations \eqref{JeLa},~\eqref{minfor} and Gocha's alternative in an equivariant context. We use here the terminology equivariant to stress the action of groups.

Let~$q$ be a prime dividing~$p-1$, and assume that~$Aut(G)$ contains a cyclic subgroup~$\Delta$ of order~$q$. We denote by~${\rm Irr}(\Delta)$ the group of~$\AA$-irreducible characters~$\chi$ of~$\Delta$, with trivial character~$\mathds{1}$: this is a group of order $q$ which does not depend on the choice of $\AA$ (for general references on~$\AA$-characters, see \cite[Chapitre~$14$]{Ser}). If~$M$ is a~$\AA\lbrack \Delta \rbrack$-module, one defines the eigenspaces of $M$ by:
$$M\lbrack\chi\rbrack:=\{x\in M;\quad \forall \delta\in \Delta, \quad \delta(x)={\chi}(\delta) x\}.$$

Notice that~$\Ll_n(\AA,G)$ and~$\E_n(\AA,G)$ are free, finitely generated over ~$\AA$ and are~$\AA\lbrack \Delta\rbrack$-modules. We study the following quantities:
\begin{equation*}
\begin{aligned}
a_n^{\chi}(\AA):=\rk_\AA\Ll_n(\AA,G)\lbrack \chi\rbrack,\quad \text{and} \quad c_n^{\chi}(\AA):=\rk_\AA\E_n(\AA,G)\lbrack \chi \rbrack.
\end{aligned}
\end{equation*}
From Maschke's Theorem and \cite[Partie $14.4$]{Ser}, we obtain the following equality:
$$a_n(\AA)=\sum_{\chi \in {\rm Irr}(\Delta)}a_n^{\chi}(\AA), \quad \text{and} \quad c_n(\AA)=\sum_{\chi \in {\rm Irr}(\Delta)}c_n^{\chi}(\AA).$$
\medskip

This article has three parts.
\medskip 

Part \ref{rep} is mostly inspired by arguments given in \cite{Minac}. Denote by $R\lbrack \Delta \rbrack$ the finite representation ring of~$\Delta$ over~$\AA$. Observe that~$R[\Delta]$ is a ring isomorphic to~$\Z[{\rm Irr}(\Delta)]$, consequently~$R[\Delta]\bigotimes_{\Z}\Q$ is a~$\Q$-algebra isomorphic to~$\Q[{\rm Irr}(\Delta)]$. Instead of considering series with coefficients in~$\Q$, as Filip \cite{Fi} and Stix \cite{stix2013rational} did, we study series with coefficients in~$R\lbrack \Delta \rbrack \bigotimes_\Z \Q$. Let us introduce:
\begin{equation*}
gocha^\ast(\AA,t):=\sum_{n\in \NN} \left(\sum_{\chi \in {\rm Irr}(\Delta)} c_n^{\chi}(\AA){\chi}\right) t^n.
\end{equation*}
We infer an equivariant version of the equality \eqref{JeLa}:

\begin{Theorem}\label{Jen}
The following equality holds for series with coefficients in~$R\lbrack \Delta \rbrack$:
\begin{equation*}
\begin{aligned}
gocha^\ast(\AA,t)=\prod_{n\in \NN} \prod_{\chi \in {\rm Irr}(\Delta)}P_\chi(\AA, t^n)^{a_n^{\chi}(\AA)},
\\ \text{ where } \quad P_\chi(\F_p, t):=\frac{1- \chi.t^p}{1-\chi.t}, \quad \text{and} \quad P_\chi(\Z_p, t):=\frac{1}{1-\chi.t}
\end{aligned}
\end{equation*}
\end{Theorem}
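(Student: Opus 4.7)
The plan is to extend the classical Lazard--Jennings derivation of \eqref{JeLa} from the PBW theorem while tracking the $\Delta$-action at every step. The first move is to make the classical identification equivariant. The completed group algebra $Al(\AA,G)$ carries a natural $\Delta$-action preserving each power $Al_n(\AA,G)$, so $\Delta$ acts by graded algebra automorphisms on $\E(\AA,G)$ and by graded Lie algebra automorphisms on $\Ll(\AA,G)$. Under the standing torsion-freeness hypothesis, the Jennings--Lazard--Labute theorem (\cite[Appendice A]{LAZ}) furnishes a canonical graded $\AA$-algebra isomorphism between $\E(\AA,G)$ and the enveloping algebra of $\Ll(\AA,G)$---the ordinary enveloping algebra when $\AA=\Z_p$, the restricted one when $\AA=\F_p$. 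By functoriality this isomorphism is automatically $\Delta$-equivariant.

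Second, I would pick an equivariant basis and enumerate its PBW monomials. Because $q=|\Delta|$ divides $p-1$ it is coprime to $p$, so $\AA[\Delta]$ is semisimple by Maschke's theorem and each $\Ll_n(\AA,G)$ decomposes as $\bigoplus_{\chi\in{\rm Irr}(\Delta)}\Ll_n(\AA,G)[\chi]$. I fix an $\AA$-basis $\{x_{n,\chi,i}\}_{i=1}^{a_n^{\chi}}$ of each eigenspace, together with a total order on the whole collection. The (restricted) PBW theorem then asserts that the ordered monomials
\begin{equation*}
x_{n_1,\chi_1,i_1}^{k_1}\cdots x_{n_r,\chi_r,i_r}^{k_r}, \qquad k_j\in\NN \text{ (resp. } k_j\in\{0,\ldots,p-1\}\text{)},
\end{equation*}
form an $\AA$-basis of $\E(\AA,G)$. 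Such a monomial sits in degree $\sum_j n_jk_j$ and is an eigenvector of character $\prod_j\chi_j^{k_j}$, because each factor $x_{n_j,\chi_j,i_j}$ transforms under $\delta\in\Delta$ by the scalar $\chi_j(\delta)$.

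The final step is to assemble the generating function. Summing over all PBW monomials and factoring variable by variable yields
\begin{equation*}
gocha^{\ast}(\AA,t)=\prod_{n\in\NN}\prod_{\chi\in{\rm Irr}(\Delta)}\left(\sum_{k}(\chi t^n)^k\right)^{a_n^{\chi}(\AA)},
\end{equation*}
where $k$ runs through $\NN$ when $\AA=\Z_p$ and through $\{0,\ldots,p-1\}$ when $\AA=\F_p$. The $\Z_p$ inner sum is $\frac{1}{1-\chi t^n}=P_{\chi}(\Z_p,t^n)$. The $\F_p$ inner sum equals $\frac{1-\chi^p t^{pn}}{1-\chi t^n}$; but $\chi^q=\mathds{1}$ together with $p\equiv 1\pmod q$ forces $\chi^p=\chi$ in $R[\Delta]$, so this reduces to $P_{\chi}(\F_p,t^n)$. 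The only genuinely subtle point is the $\Delta$-equivariance asserted in the first step; this reduces to observing that the completion $Al(\AA,\cdot)$, the augmentation filtration, the associated graded objects, and the bracket on $\Ll(\AA,G)$ are all functorial in $G$, so every automorphism of $G$---in particular each element of $\Delta$---acts compatibly throughout.
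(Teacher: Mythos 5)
Your proof is correct and follows essentially the same route as the paper: both rely on realizing $\E(\AA,G)$ as the $\Delta$-equivariant universal (restricted, when $\AA=\F_p$) enveloping algebra of $\Ll(\AA,G)$ and then enumerating PBW monomials while tracking degree and character. The only differences are presentational --- the paper's Lemma~\ref{Jerep} first reduces to a single eigencharacter via a tensor-product decomposition, whereas you enumerate directly over all characters --- and you helpfully make explicit the identity $\chi^p=\chi$ (coming from $q\mid p-1$) that the paper leaves implicit in writing $P_\chi(\F_p,t)=\frac{1-\chi\cdot t^p}{1-\chi\cdot t}$.
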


As done in Part~$2$ \cite{Minac}, one introduces the logarithm of series with coefficients in~$R\lbrack \Delta \rbrack$, defined by rationals~$b_n^{\chi}(\AA)\in \Q$:
\begin{equation*}
log(gocha^\ast(\AA,t)):=\sum_{n\in \NN} \left(\sum_{\chi \in {\rm Irr}(\Delta)} b_n^{\chi}(\AA){\chi}\right) t^n.
\end{equation*}
Then, we obtain an equivariant version of Formula \eqref{minfor}.

Write~$n:=mp^k$, with~$(m,p)=1$, and assume~$(n,q)=1$. Then:
\begin{equation}\label{ap}
\begin{aligned}
a_n^{\chi}(\F_p)=w_m^{\chi}(\F_p)+w_{mp}^{\chi}(\F_p)+\dots+w_{mp^k}^{\chi}(\F_p), \quad \text{ and } \quad a_n^{\chi}(\Z_p)=w_n^{\chi}(\Z_p),
\\ \text{ where } \quad w_n^{\chi}(\AA):=\frac{1}{n}\sum_{m|n}\mu(n/m)mb_m^{\chi^{m/n}}(\AA)\in \Q.
\end{aligned}
\end{equation}

Some results on the coefficients~$a_n^{\chi}(\Z_p)$ were given by Filip \cite{Fi} and Stix \cite{stix2013rational} for groups defined by one quadratic relation.
\medskip

In Part \ref{assy}, we study cardinalities of eigenspaces of~$\Ll(\AA, G)$. When~$\Ll(\AA, G)$ is infinite dimensional (as a free module over~$\AA$), we observe using the pigeonhole principle that~$\Ll(\AA, G)$ admits at least one infinite dimensional eigenspace.
\begin{quote}
\emph{Main Question: Which eigenspaces of~$\Ll(\AA, G)$ are infinite dimensional?}
\end{quote}
For this purpose, we introduce~$\chi_0$-filtration on~$Al(\AA, G)$, which depends on a fixed non-trivial irreducible character~$\chi_0$. It is denoted by~$(E_{\chi_0, n}(\AA,G))_n$, and described in Subpart~\ref{DfilLamb}. Furthermore, we assume that $E_{\chi_0, n}(\AA, G)/E_{\chi_0,n+1}(\AA, G)$ is \textbf{torsion-free} over $\AA$. This condition is automatically satisfied when $\AA=\F_p$; and for $\AA=\Z_p$, it is true whenever $G$ is free or in the situation of \cite[Th{\'e}or{\`e}me $2$]{labute1967algebres}.
This allows us to define~$gocha_{\chi_0}(\AA,t)$ by:
\begin{equation*}
\begin{aligned}
gocha_{\chi_0}(\AA,t):=\sum_{n\in \NN} c_{\chi_0, n}(\AA) t^n, 
\\ \text{where} \quad c_{\chi_0, n}(\AA):=\rk_\AA (E_{\chi_0, n}(\AA, G)/E_{\chi_0,n+1}(\AA, G)).
\end{aligned}
\end{equation*}

\medskip

Part \ref{expart} illustrates our theoretical results for finitely presented pro-$p$ groups~$G$, with cohomological dimension~$\cd(G)$ less than or equal to~$2$. 

Proposition \ref{key} allows us to compute the~$gocha$ series of~$G$, and shows that the inverse of the gocha series is a polynomial. Then Theorem \ref{MRTsol} answers (and generalizes) \cite[Question~$2.13$]{Minac}, showing that~$gocha(\AA,t)$,~$gocha^\ast(\AA,t)$ and~$gocha_{\chi_0}(\AA,t)$ do not depend on the choice of the ring~$\AA$. Finally, considering \cite[Theorem D]{weigel2015graded} in our context, one recovers~$a_n^\chi$ from roots of the polynomial~$1/gocha^\ast$ (see Proposition \ref{Weigen}).

Let us now introduce our last result. Since (Proposition \ref{key})~$\chi_{eul,\chi_0}(t):=1/gocha_{\chi_0}(t)$ is a polynomial, we write the degree of~$\chi_{eul,\chi_0}$ as~$\deg_{\chi_0}(G)$. Denote the~$\chi_0$-eigenvalues of~$G$ by~$\lambda_{\chi_0,i}$, and let~$L_{\chi_0}(G)$ be the~$\chi_0$-entropy of~$G$ defined by:
$$\chi_{eul,\chi_0}(t):=\prod_{i=1}^{\deg_{\chi_0}(G)}(1-\lambda_{\chi_0,i}t),\quad L_{\chi_0}(G):= \max_{1\leq i \leq \deg_{\chi_0}(G)} |\lambda_{\chi_0,i}|.$$

\begin{Theorem} \label{mildcomp2D}
Assume for some~$\chi_0$ that~$L_{\chi_0}(G)$ is reached for a unique eigenvalue~$\lambda_{\chi_0,i}$ such that:
\begin{enumerate}[\quad (i)]
\item~$\lambda_{\chi_0,i}$ is real,
\item~$L_{\chi_0}(G)=\lambda_{\chi_0,i}>1$.
\end{enumerate}
Then every eigenspace of~$\Ll(\AA, G)$ is infinite dimensional.
\end{Theorem}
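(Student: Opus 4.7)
The plan is to convert the dominant-eigenvalue hypothesis on $\chi_{eul,\chi_0}$ into a strict exponential lower bound for $c_{\chi_0,n}(\AA)$, and then, via the equivariant product formula (Theorem~\ref{Jen}) together with the construction of the $\chi_0$-filtration, to force every eigenspace of $\Ll(\AA, G)$ to be infinite dimensional.

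First, I would carry out a classical partial-fraction analysis of
\[
gocha_{\chi_0}(\AA, t) \;=\; \frac{1}{\chi_{eul, \chi_0}(t)} \;=\; \frac{1}{\prod_{i=1}^{\deg_{\chi_0}(G)}(1 - \lambda_{\chi_0, i}\, t)}.
\]
Under hypotheses (i) and (ii), the unique pole of smallest modulus is at $t = 1/\lambda_{\chi_0, i^\ast}$, is real, and after grouping equal roots one may take it to be simple. Extracting its residue yields
\[
c_{\chi_0, n}(\AA) \;=\; \alpha\,(\lambda_{\chi_0, i^\ast})^n + O(\rho^n),
\]
with $\alpha > 0$ and $\rho < \lambda_{\chi_0, i^\ast}$; since $\lambda_{\chi_0, i^\ast} > 1$, the sequence $(c_{\chi_0, n}(\AA))_{n \in \NN}$ grows strictly exponentially with base $\lambda_{\chi_0, i^\ast}$.

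Next, I would combine this asymptotic with Theorem~\ref{Jen} and the definition of the $\chi_0$-filtration from Subpart~\ref{DfilLamb}, which realises $gocha_{\chi_0}$ as the $\chi_0$-scalar projection of $gocha^\ast$. Suppose for contradiction that, for some $\chi^\ast \in {\rm Irr}(\Delta)$, the eigenspace $\Ll(\AA, G)[\chi^\ast]$ is finite dimensional, so that $a_n^{\chi^\ast}(\AA) = 0$ for all $n \geq N$. Then the corresponding factors $P_{\chi^\ast}(\AA, t^n)^{a_n^{\chi^\ast}(\AA)}$ disappear from the product in Theorem~\ref{Jen} for $n \geq N$. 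A careful comparison of exponential growth rates in $R[\Delta]\bigotimes_\Z\Q$---tracking how each remaining $P_\chi(\AA, t^n)$ contributes to the $\chi_0$-projection---would then force the exponential base of $(c_{\chi_0,n}(\AA))_{n \in \NN}$ to be strictly less than $\lambda_{\chi_0, i^\ast}$, contradicting the first step. Running the argument over all $\chi^\ast \in {\rm Irr}(\Delta)$ yields the theorem.

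The main obstacle is the second paragraph: extracting, from the definition of $E_{\chi_0, n}(\AA, G)$, the precise product shape acquired by $gocha_{\chi_0}$ inside the equivariant Jennings formula, and controlling the character-by-character contributions of each $P_\chi(\AA, t^n)$ to the $\chi_0$-projection. This bookkeeping is what allows one to rule out finite-dimensionality of each eigenspace by a strict comparison of exponential bases; without it, the exponential growth from the first step is not directly visible as a statement about the individual $a_n^\chi(\AA)$.
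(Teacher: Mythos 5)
Your first paragraph is close in spirit to what the paper does: a dominant-root analysis of $1/\chi_{eul,\chi_0}$. The paper works with the logarithmic coefficients $b_{\chi_0,n}$ (showing $b_{\chi_0,n}\sim C\lambda^n/n$, with $C>0$ because the dominant root is real, simple, and unique in modulus), rather than directly with $c_{\chi_0,n}$, but the asymptotic content is the same.

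The second paragraph, however, contains the entire substance of the theorem and you have left it as a sketch resting on a mischaracterization. The series $gocha_{\chi_0}$ is \emph{not} a ``$\chi_0$-scalar projection'' of $gocha^\ast$: the $\chi_0$-filtration assigns weight $\psi_{\chi_0}(\chi)$ to the generator $X_j^\chi$, so it is a genuinely different, character-weighted filtration on the same algebra, not a projection of the ordinary grading. Consequently the proposed ``comparison of exponential growth rates in $R[\Delta]\otimes_\Z\Q$'' has no mechanism behind it: even assuming $a_n^{\chi^\ast}=0$ for $n\geq N$, nothing in Theorem~\ref{Jen} visibly caps the growth rate of $c_{\chi_0,n}$, and you offer no computation that would do so. What actually bridges the two filtrations in the paper is a concrete combinatorial transfer: Lemma~\ref{delact} shows $\E_{\chi_0,n}(\AA,G)$ sits entirely in the $\chi_0^n$-eigenspace; Proposition~\ref{comp2} gives the inequalities $a_{\chi_0,qn+i}\leq\sum_{j=n}^{qn+i}a_j^{\chi_0^i}$ by estimating, for a monomial of $\chi_0$-weight $qn+i$, the range of its ordinary degree; and Corollary~\ref{Delmin} (Möbius inversion for the $\chi_0$-filtration) turns the asymptotic on $b_{\chi_0,n}$ into $a_{\chi_0,q_i}>0$ for infinitely many primes $q_i$ in each residue class mod $q$. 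Feeding those positive $a_{\chi_0,q_i}$ through the inequality then forces infinitely many $a_j^\chi>0$ for each $\chi$. Without this chain of lemmas---none of which appears in your proposal---the argument does not close, and as written it is a restatement of the desired conclusion rather than a proof.
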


We also prove in Theorem \ref{freecomp2}, that every finitely generated noncommutative free pro-$p$ group~$G$ satisfies the hypotheses of Theorem \ref{mildcomp2D}. Let us finish this introduction with explicit examples:


\begin{Exa}[Cohomological dimension~$2$]\label{mildex}
Let us take~$p=103$,~$q=17$. Observe that~$\overline{8}\in {\F_{103}}$ is a primitive~$17$-th root of unity.
\\Consider the pro-$103$ group~$G$, generated by three generators~$x,y,z$ and one relation~$u=\lbrack x;y \rbrack$. By \cite[Theorem]{LABUTE197016}, the~$\Z_p$-module~$\Ll(\Z_p,G)$ is torsion-free. If we apply \cite[Corollary~$5.3$]{FOR} and Proposition \ref{key}, we remark that~$\cd(G)=2$ and: 
$$gocha(\AA,t):=1/(1-3t+t^2).$$ 
Introduce an automorphism~$\delta$ on~$G$, by~$\delta(x):=x^8$,~$\delta(y):=y^{8^2}$ and~$\delta(z):=z^{8^3}$; Proposition \ref{com} justifies that this action is well defined. Consequently~$\Delta:=\langle \delta \rangle$ is a subgroup of order~$17$ of~$Aut(G)$. Fix the character~${\chi_0}\colon \Delta \to \F_{103}^\times; \delta \mapsto \overline{8}$.

Applying Formula \eqref{ap}, let us compute some coefficients~$a_n^{\chi}$ and~$c_n^{\chi}$.
\\Observe first that: 
\begin{multline*}
gocha^\ast(\AA,t):=\frac{1}{1-({\chi_0}+{\chi_0}^2+{\chi_0}^3).t+{\chi_0}^3. t^2},\quad \text{and} \quad 
\\ \log(gocha^\ast(\AA,t))=({\chi_0}+{\chi_0}^2+{\chi_0}^3).t+
({\chi_0}^6/2+{\chi_0}^5+\frac{3\chi_0^4}{2}+\frac{\chi_0^2}{2}).t^2+
\\(\frac{\chi_0^9}{3}+{\chi_0}^8+2{\chi_0}^7+\frac{4\chi_0^6}{3}+\chi_0^5+\frac{\chi_0^3}{3}).t^3+\dots .
\end{multline*}
From Formula \eqref{minfor}, we infer:~$a_2=2$ and~$a_3=5$. Furthermore Formula \eqref{ap} gives us: 
\begin{equation*}
a_2^{\chi_0^i}=\frac{2b_2^{\chi_0^i}-b_1^{\chi_0^{9i}}}{2}, \quad \text{and} \quad a_3^{\chi_0^i}=\frac{3b_3^{\chi_0^i}-b_1^{\chi_0^{6i}}}{3}.
\end{equation*}
Consequently, we obtain:
\begin{itemize}
\item[$\bullet$]~$a_2^{\chi_0^4}=a_2^{\chi_0^5}=1$, else~$a_2^{\chi_0^i}=0$ when~$i\neq 5$.
\item[$\bullet$]~$a_3^{\chi_0^8}=a_3^{\chi_0^6}=a_3^{\chi_0^5}=1$, and~$a_3^{\chi_0^7}=2$. Else if~$i\notin \{5;6;7;8\}$,~$a_3^{\chi_0^i}=0$.
\end{itemize}

Here, by \cite[Theorem $1$ and Part $3$]{Labutecentralseries}, the algebra $\Ll_{\chi_0}(G,\Z_p)$ is torsion-free over $\Z_p$. We have:~$$gocha_{\chi_0}(\AA,t):=\frac{1}{1-t-t^2},$$
and the maximal~$\chi_0$-eigenvalue of~$G$ is real and strictly greater than~$1$. 
\\Therefore by Theorem \ref{mildcomp2D}, all eigenspaces of~$\Ll(\AA, G)$ are infinite dimensional. 
\end{Exa}

\begin{Exa}[FAB example]
Following arguments given by \cite{Gras}, we enrich the example given in \cite[Part $2.1$]{split}, and obtain an example where~$G$ is FAB, i.e. every open subgroup has finite abelianization (for more details, see Example \ref{aritex}, and for references on FAB groups, see \cite{Labute} and \cite{MaireFAB}). 

Take~$p=3$, and consider~$\K:=\Q(\sqrt{-163})$. Then we define~$\Delta:=\Gal(\K/\Q)=\Z/2\Z$, and fix~$\chi_0$ the non-trivial irreducible character of~$\Delta$ over~$\F_p$. Consider the following set of places in $\Q$:~$\{7,19,13,31,337,43\}$. The class group of~$\K$ is trivial, the primes~$7,19,13,31,337$ are inert in~$\K$, and the prime~$43$ totally splits in~$\K$. 

Define~$S$ the primes above the previous set in~$\K$, and~$\K_S$ the maximal~$p$-extension unramified outside~$S$. Then~$\Delta$ acts on~$G:=\Gal(\K_S/\K)$, which is FAB by Class Field Theory. 

We can show that the pro-$p$ group~$G$ is mild, and Proposition \ref{numbtheo} gives:
$$gocha^\ast(\F_p,t):=\frac{1}{1-(6+\chi_0) t+ (6+\chi_0)t^2}, \quad \text{and} \quad gocha_{\chi_0}(\F_p,t):=\frac{1}{1-t-5t^2+6t^4}.$$
\\Therefore by Theorem \ref{mildcomp2D}, all eigenspaces of~$\Ll(\F_p, G)$ are infinite dimensional. 
\end{Exa}

\subsection*{Notations}
We follow the notations and definitions of \cite{AN0} and \cite[Appendice A]{LAZ}.

Let~$p$ be an odd prime, and~$G$ a finitely generated pro-$p$ group with minimal presentation~$1\to R \to F \to G \to 1$, and denote by~$\AA$ one of the rings~$\F_p$ or~$\Z_p$. Assume that~$Aut(G)$ contains a cyclic subgroup~$\Delta$ of order~$q$, a prime factor of~$p-1$. By \cite[Lemma~$2.15$]{hajir2019prime}, we observe that~$\Delta$ lifts to a subgroup of~$Aut(F)$.

When the context is clear, we omit the~$\AA$ when denoting filtrations (and associated invariants). Additionally, we always suppose that~$\Ll(\AA,G)$ is \textbf{torsion-free over}~$\AA$. 

Denote by~$Al(\AA, G)$ the completed group algebra of~$G$ over~$\AA$ and observe that~$G$ embeds naturally into~$Al(\AA, G)$.

For~$\chi\in {\rm Irr}(\Delta)$, we fix~$\{x_{j}^{\chi} \}_{1\leq j \leq d^{\chi} }$ a lift in~$F$ of a basis of~$\Ll_1(\AA,G)\lbrack\chi\rbrack$, where \linebreak~$d^{\chi}:=\rk_\AA \Ll_1(\AA,G)\lbrack\chi\rbrack$; by \cite[Corollaire~$3$, Proposition~$42$, Chapitre 14]{Ser}, this basis does not depend on the choice of~$\AA$. The Magnus isomorphism, from \cite[Chapitre II, Partie~$3$]{LAZ}, gives us the following identification of~$\AA$-algebras between~$Al(\AA, F)$ and the noncommuative series over~$X_j^{\chi}$'s with coefficients in~$\AA$: 
\begin{equation}\label{Magnus iso}
\phi_\AA \colon Al(\AA, F) \simeq \AA \langle \langle X^{\chi}_j; \chi \in {\rm Irr}(\Delta), 1\leq j \leq d^{\chi} \rangle\rangle; \quad x^{\chi}_j \mapsto X^{\chi}_j+1
\end{equation}

Define~$E(\AA)$ as the algebra~$\AA\langle\langle X^{\chi}_j; \chi\in {\rm Irr}(\Delta), 1\leq j \leq d^{\chi} \rangle \rangle$ filtered by~${\deg(X^{\chi}_j)=1}$ and write~$\{E_n(\AA)\}_{n\in \NN}$ for its filtration. One introduces~$I(\AA,R)$ the ideal of $E(\AA)$ generated by~$\{\phi_\AA(r-1); r\in R\}$ endowed with the induced filtration~$\{I_n(\AA,R):=I(\AA,R)\cap E_n(\AA)\}_{n\in \NN}$, and~$E(\AA, G)$ the quotient filtered algebra~$E(\AA)/I(\AA,R)$, with induced filtration~$\{E_n(\AA, G)\}_{n\in \NN}$.

We call~$M:=\bigoplus_{n\in \NN} M_n$ a graded locally finite ($\AA\lbrack \Delta \rbrack$-)module, if~$M_n$ is a finite dimensional ($\AA\lbrack \Delta\rbrack$)-module for every integer~$n$; and denote its Hilbert series by:
$$M(t):=\sum_{n\in \NN} (\rk_\AA M_n) t^n.$$

We make the following convention; we say that~$M$ is an~$\AA$-Lie algebra if~$M$ is a graded locally finite Lie algebra over~$\AA$, and when~$\AA:=\F_p$ we assume in addition that~$M$ is a restricted~$p$-Lie algebra. 
Recall the following graded locally finite~$\AA\lbrack \Delta\rbrack$-module and~$\AA$-Lie algebra, defined at the beginning:
\begin{equation*}
\begin{aligned}
\E(\AA):=\bigoplus_{n\in \NN} \E_n(\AA), \quad \text{where} \quad \E_n(\AA):=E_n(\AA)/E_{n+1}(\AA),
\\ \Ll(\AA,G):=\bigoplus_{n\in \NN} \Ll_n(\AA,G), \quad \text{and} \quad
\E(\AA, G):=\bigoplus_{n\in \NN} \E_n(\AA,G).
\end{aligned}
\end{equation*}

If~$P:=\sum_{n\in \NN} p_nt^n$ and~$Q:=\sum_{n\in \NN} q_n t^n$ are two series with real coefficients, we say that :~$P\leq Q \iff \forall n\in \NN, \quad p_n\leq q_n.$ We denote by~$\mu$ the Möbius function.

\section{An equivariant version of Min{\'{a}}{\v{c}}-Rogelstad-Tân's results} \label{rep}

Recall:

\begin{equation*}
gocha^\ast(\AA,t):=\sum_{n\in \NN} \left(\sum_{\chi \in {\rm Irr}(\Delta)} c_n^{\chi}{\chi}\right) t^n \in R\lbrack \Delta \rbrack\lbrack \lbrack t \rbrack \rbrack,
\end{equation*}
where $R\lbrack \Delta\rbrack$ denotes the finite representation ring of $\Delta$ (over $\AA$).

\subsection{Equivariant Hilbert series}
The aim of this subpart is to prove the following formula:
\begin{equation}\label{JeLaRep}
\begin{aligned}
gocha^\ast(\AA,t)=\prod_{n\in \NN} \prod_{\chi \in {\rm Irr}(\Delta)}P_\chi(\AA, t^n)^{a_n^{\chi}}, 
\\ \text{where} \quad P_\chi(\F_p,t):=\frac{1- \chi.t^p}{1-\chi.t}, \quad \text{and} \quad P_\chi(\Z_p,t):=\frac{1}{1-\chi.t}.
\end{aligned}
\end{equation}
This is Theorem \ref{Jen} defined in our introduction.

\begin{defi}
Let~$M:=\bigoplus_{n\in \NN} M_n$ be an~$\AA$-Lie algebra, graded locally finite~$\AA\lbrack \Delta \rbrack$-module, with basis~$\{x_{n,1};\dots;x_{n,m_n}\}_{n\in \NN}$, where~$m_n:=\rk_\AA M_n$.
We define:
\begin{itemize}
\item[$\bullet$] the graded locally finite module with basis given by words on~$\{x_{n,j}\}_{n\in \NN; j\in [\![1;m_n]\!]}$ by:
$$\tilde{U}(M):=\bigoplus_{n\in \NN} \tilde{U}(M)_n,$$
moreover, when~$\AA:=\F_p$, we also assume that the~$p$-restricted operation is compatible with the multiplicative structure of~$\tilde{U}(M)$;
\item[$\bullet$] the equivariant Hilbert series of~$M$ with coefficient in~$R\lbrack \Delta \rbrack$ by:
\begin{equation*}
\begin{aligned}
M^\ast(t)&:=\sum_{n\in \NN} \left( \sum_{\chi \in {\rm Irr}(\Delta)} m_n^{\chi}\chi \right) t^n &
\\ \text{where} \quad m_n^{\chi} &:=\rk_\AA M_n\lbrack \chi\rbrack \quad \text{ for every integer } n.&
\end{aligned}
\end{equation*}
\end{itemize}
\end{defi}

\begin{rema}
Since the action of~$\Delta$ over a graded locally finite module is semi-simple, it always preserves the grading. Consequently, if~$M$ is a graded locally finite~$\AA\lbrack \Delta \rbrack$-module, then the graded locally finite module~$\tilde{U}(M)$ is also endowed with a natural structure of graded locally finite~$\AA\lbrack \Delta \rbrack$-module. 
\end{rema}

We give a well-known result on Lie algebras, telling us that~$\tilde{U}$ is a universal enveloping algebra of~$M$.

\begin{theo}[Poincaré-Birkhoff-Witt]\label{PBW}
Let~$M$ be a graded locally finite~$\AA\lbrack\Delta \rbrack$-module and~$\AA$-Lie algebra. Then~$\tilde{U}(M)$ is a graded locally finite~$\AA\lbrack \Delta \rbrack$-module, universal~$\AA$-Lie algebra of~$M$.
\end{theo}

\begin{proof}
When~$\AA:=\Z_p$, see for instance \cite[Theorem~$2.1$]{Labute}.
\\When~$\AA:=\F_p$, see for instance \cite[Proposition~$12.4$]{DDMS}.
\end{proof}

\begin{coro}\label{proofFormmin}
The set~$\E(\AA,G)$ is a graded locally finite,~$\AA$-universal Lie algebra of~$\Ll(\AA,G)$. Consequently~$\E(\AA,G)$ is torsion-free.
\end{coro}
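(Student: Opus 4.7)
The plan is to establish the natural identification $\E(\AA, G) \cong \tilde{U}(\Ll(\AA, G))$, from which torsion-freeness drops out formally from PBW.

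\textbf{Constructing the morphism.} I would first define $\iota : \Ll(\AA, G) \to \E(\AA, G)$ by sending $g G_{n+1}(\AA)$, for $g \in G_n(\AA)$, to $(g-1) + Al_{n+1}(\AA, G)$. Well-definedness is immediate from the very definition of $G_n(\AA)$. To check that $\iota$ is a morphism of Lie algebras, for $g \in G_m(\AA)$ and $h \in G_n(\AA)$ I would exploit the identity
\[
gh - hg = ([g,h] - 1)\cdot hg
\]
inside $Al(\AA, G)$: since $[g,h] - 1 \in Al_{m+n}(\AA, G)$ and $hg - 1 \in Al_1(\AA, G)$, expansion gives $gh - hg \equiv [g,h] - 1 \pmod{Al_{m+n+1}(\AA, G)}$, which is precisely $[\iota(g), \iota(h)]_{\E} = \iota([g,h])$. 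In the case $\AA = \F_p$ one verifies analogously that $(g-1)^p \equiv g^p - 1 \pmod{Al_{pn+1}(\F_p, G)}$ so that the restricted $p$-power is respected. By the universal property provided by Theorem \ref{PBW}, $\iota$ extends uniquely to a morphism $\phi : \tilde{U}(\Ll(\AA, G)) \to \E(\AA, G)$ of graded locally finite $\AA[\Delta]$-algebras. Surjectivity of $\phi$ is clear since $\E(\AA, G)$ is generated in degree one by $\E_1(\AA, G) = \Ll_1(\AA, G)$, on which $\phi$ restricts to the identity.

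\textbf{The main obstacle: injectivity of $\phi$.} This is the content of Quillen's theorem for $\AA = \F_p$ and Lazard's theorem for $\AA = \Z_p$. I would first dispose of the free case: for the free pro-$p$ group $F$, the Magnus isomorphism \eqref{Magnus iso} identifies $\E(\AA, F)$ with the free associative $\AA$-algebra on the $X_j^\chi$, which is canonically the universal (respectively restricted, when $\AA = \F_p$) enveloping algebra of the free Lie algebra $\Ll(\AA, F)$; hence $\phi$ is tautologically an isomorphism in the free case. For general $G$, the cleanest finishing move is a Hilbert-series comparison: the Jennings--Lazard formula \eqref{JeLa} expresses $gocha(\AA, t)$ in terms of the sequence $(a_n(\AA))_n$, and by Theorem \ref{PBW} the same expression computes the Hilbert series of $\tilde{U}(\Ll(\AA, G))$. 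A degreewise surjection of graded locally finite $\AA$-modules of equal rank in every degree is an isomorphism, so $\phi$ is injective.

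\textbf{Conclusion and torsion-freeness.} The preceding steps yield $\E(\AA, G) \cong \tilde{U}(\Ll(\AA, G))$ as graded locally finite $\AA[\Delta]$-algebras, which is the universal $\AA$-Lie algebra property of the statement. For the second assertion, the standing hypothesis that $\Ll(\AA, G)$ is torsion-free over $\AA$ combined with PBW (Theorem \ref{PBW}) produces an $\AA$-basis of $\tilde{U}(\Ll(\AA, G))$ consisting of ordered monomials in a chosen $\AA$-basis of $\Ll(\AA, G)$. In particular $\tilde{U}(\Ll(\AA, G))$ is $\AA$-free, hence torsion-free, and transport along $\phi$ gives the torsion-freeness of $\E(\AA, G)$.
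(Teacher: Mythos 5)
Your strategy — construct $\phi\colon \tilde{U}(\Ll(\AA,G)) \to \E(\AA,G)$ explicitly, verify surjectivity, and then kill the kernel degreewise by a rank count — is a genuinely different route from the paper's, which does not re-prove the isomorphism at all but simply cites it from Lazard (Appendice A, Th\'eor\`eme~$2.6$) when $\AA=\F_p$ and from an adaptation of Hartl's Theorem~$1.3$ when $\AA=\Z_p$. Your verification that $\iota$ is a morphism of (restricted) Lie algebras is correct, using the convention $\lbrack g,h\rbrack = ghg^{-1}h^{-1}$; and the rank-counting step at the end is sound as $\Z_p$-module algebra (a surjection from a free module onto a finitely generated module of the same rank has torsion kernel inside a torsion-free module, hence is injective, forcing the target to be free).

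The gap is that the finishing move is circular within the paper's own citation chain. You invoke the Jennings--Lazard formula \eqref{JeLa} to equate $\rk_\AA\E_n(\AA,G)$ with $\rk_\AA\tilde{U}(\Ll(\AA,G))_n$ in every degree, but the paper sources \eqref{JeLa} from Lazard's Appendice~A, Proposition~$3.10$, which in Lazard's development is a \emph{consequence} of the enveloping-algebra isomorphism $\E(\AA,G)\simeq\tilde{U}(\Ll(\AA,G))$ — precisely what you are trying to prove. To make the argument stand on its own you would need an independent proof of the dimension formula. For $\AA=\F_p$ one exists (Jennings' original combinatorial argument for finite $p$-groups, passed to the inverse limit), but you would have to cite that rather than \eqref{JeLa}. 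For $\AA=\Z_p$ it is much less clear that the rank formula admits a derivation not already routed through the isomorphism; this is presumably why the paper appeals to Hartl's structural argument instead of any dimension count. A secondary remark: the "free case" digression via the Magnus isomorphism is never used — the Hilbert-series comparison for general $G$ does not reduce to the free case — so it reads as an abandoned alternative rather than a step in the proof.
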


\begin{proof}
Let us first prove that~$\E(\AA,G)$ is a graded locally finite,~$\AA$-universal Lie algebra of~$\Ll(\AA,G)$. By Theorem \ref{PBW}, we only need to show that~$\tilde{U}(\Ll(\AA,G)) \simeq \E(\AA,G)$.

For~$\AA:=\F_p$, see \cite[Appendice A, Théorème~$2.6$]{LAZ}.

For~$\AA:=\Z_p$, the proof of \cite[Theorem~$1.3$]{HARTL20103276} carries to the case $E(\Z_p,G)$ with minor alterations. We consider $\Z_p$ and $\Q_p$ rather than $\Z$ and $\Q$. Furthermore, we conclude using the fact that $G$ is finitely generated, so $\grad(E(\Z_p,G))=\E(\Z_p,G)$ is isomorphic to $\grad(\Z_p[G])$, where $\Z_p[G]$ is filtered by power of the augmentation ideal over $\Z_p$. 
\end{proof}

\begin{rema}
Notice that~$\E(\AA,G)$ is also isomorphic to~$\tilde{U}(\Ll(\AA,G))$ as an~$\AA\lbrack \Delta\rbrack$-module. Therefore, we have:
$$\tilde{U}(\Ll(\AA,G))^\ast(t):=gocha^\ast(\AA,t).$$
\end{rema}

Before proving Formula \ref{JeLaRep}, we need the following result:

\begin{lemm}\label{Jerep}
Let~$M$ be a graded locally finite~$\AA\lbrack \Delta\rbrack$-module and~$\AA$-Lie algebra,
then:
\begin{equation*}
\begin{aligned}
\tilde{U}(M)^\ast(t)=\prod_{n\in \NN} \prod_{\chi \in {\rm Irr}(\Delta)} P_\chi(\AA,t^n)^{m_n^{\chi}}, 
\\ \text{ where } \quad P_\chi(\F_p,t):=\frac{1-\chi.t^p}{1-\chi.t}, \quad \text{and} \quad P_\chi(\Z_p,t):=\frac{1}{1-\chi.t}.
\end{aligned}
\end{equation*}
\end{lemm}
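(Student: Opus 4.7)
The plan is to reduce the statement to a direct PBW-style counting argument on monomials, keeping track of the $\Delta$-character rather than only the degree. First I would fix, using Maschke's theorem, a homogeneous $\AA$-basis $\{x_{n,j}^\chi\}$ of $M$ that splits the $\Delta$-action into eigenspaces: each $x_{n,j}^\chi$ is homogeneous of degree $n$ and belongs to $M_n[\chi]$, and for each pair $(n,\chi)$ there are $m_n^\chi$ such basis vectors. By Theorem \ref{PBW}, after choosing an arbitrary total order on this basis, the algebra $\tilde{U}(M)$ has an $\AA$-basis of ordered monomials $\prod (x_{n,j}^\chi)^{e_{n,j,\chi}}$, where the allowed exponents are $e_{n,j,\chi}\in \NN$ when $\AA=\Z_p$, and $0\leq e_{n,j,\chi}\leq p-1$ when $\AA=\F_p$ because of the restricted structure.

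Next I would observe that the action of $\Delta$ on $\tilde U(M)$ is by algebra automorphisms (this is immediate from the fact that $\Delta$ acts by Lie algebra automorphisms on $M$ and the universal property of $\tilde U$). Hence the character of a monomial is the product of the characters of its factors, and the grading behaves the same way: an ordered monomial $\prod (x_{n,j}^\chi)^{e_{n,j,\chi}}$ is an eigenvector for $\Delta$ with character $\prod \chi^{e_{n,j,\chi}}$ and lies in degree $\sum n\cdot e_{n,j,\chi}$. Therefore, if we define the equivariant Hilbert series $\tilde U(M)^\ast(t)$ in $R[\Delta][[t]]$ as in the definition, it factors as a product, over all basis vectors $x_{n,j}^\chi$, of the one-variable contribution
\[
S_\chi(\AA, t^n):=\sum_{e} \chi^{e} t^{ne},
\]
where $e$ ranges over $\NN$ in the $\Z_p$ case and over $\{0,\ldots,p-1\}$ in the $\F_p$ case.

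Finally, I would evaluate these two geometric series. For $\AA=\Z_p$ one gets directly $S_\chi(\Z_p, t^n)=\frac{1}{1-\chi t^n}=P_\chi(\Z_p,t^n)$. For $\AA=\F_p$, the finite geometric sum gives $S_\chi(\F_p, t^n)=\frac{1-\chi^p t^{np}}{1-\chi t^n}$; here I would use the key arithmetic input that $|\Delta|=q$ divides $p-1$, so $p\equiv 1 \pmod q$, which forces $\chi^p=\chi$ in $R[\Delta]$, and therefore $S_\chi(\F_p, t^n)=\frac{1-\chi t^{np}}{1-\chi t^n}=P_\chi(\F_p,t^n)$. Grouping the factors according to the $m_n^\chi$ basis vectors in $M_n[\chi]$ yields the announced product formula.

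The main technical point I expect is justifying rigorously that the equivariant Hilbert series of a tensor/ordered product of $\AA[\Delta]$-modules is the product of the individual equivariant Hilbert series, i.e.\ that $R[\Delta]\otimes_\Z \Q$ really behaves like a scalar ring for this kind of bookkeeping. Once this multiplicativity is established from the fact that the character of a tensor is the product of characters and that $\Delta$ preserves the grading, the rest is the formal PBW expansion and the evaluation of a geometric series, with the small arithmetic observation $\chi^p=\chi$ taking care of the $\F_p$ case.
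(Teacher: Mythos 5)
Your argument is correct and follows essentially the same route as the paper's proof: both rely on PBW to get an ordered monomial basis, both exploit the multiplicativity of the equivariant Hilbert series (the paper phrases this as $(M\otimes N)^\ast = M^\ast N^\ast$ together with $\tilde U(M\oplus N)=\tilde U(M)\otimes\tilde U(N)$ to reduce to a single-character $M$, whereas you factor directly over basis vectors, which amounts to the same thing), and both sum the resulting finite or infinite geometric series. One small but genuine improvement in your write-up is that you make explicit the arithmetic fact $\chi^p=\chi$ (from $q\mid p-1$) that is needed to rewrite $\frac{1-\chi^p t^{np}}{1-\chi t^n}$ as $P_\chi(\F_p,t^n)=\frac{1-\chi t^{np}}{1-\chi t^n}$; the paper uses this identity silently when it identifies the counting polynomial $\prod_n\bigl[1+\chi_0 t^n+\dots+\chi_0^{p-1}t^{(p-1)n}\bigr]^{m_n}$ with $\prod_n P_{\chi_0}(\F_p,t^n)^{m_n}$.
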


\begin{proof}
Let us first prove the case~$\AA:=\F_p$.

We are inspired by the proof of \cite[Corollary~$12.13$]{DDMS}. Observe that if~$M$ and~$N$ are graded locally finite~${\F_p}\lbrack \Delta \rbrack$-modules, then ~$M\bigotimes_{\F_p} N$ is also a graded locally finite~$\F_p\lbrack \Delta \rbrack$-module; moreover~$(M\bigotimes_{\F_p} N)^\ast(t):=M^\ast(t) N^\ast(t)$, and~$\tilde{U}(M\bigoplus N)=\tilde{U}(M)\bigotimes_{\F_p} \tilde{U}(N)$. So assume that :
$$M^\ast(t):=\sum_n m_n \chi_0 .t^n , \quad \text{ for some fixed and non-trivial } \chi_0 \in {\rm Irr}(\Delta).$$ 
Consider~$X_n:=\{x_{n,1},\dots, x_{n,m_n}\}$, an~${\F_p}\lbrack \Delta \rbrack$-basis of~$M_n$, where each~$x_{n,j}$ is of degree~$n$. Then a graded locally finite~${\F_p}\lbrack \Delta \rbrack$-basis of~$M$ is given by the (disjoint) union of all~$X_n$'s. 
Denote by~$$\tilde{U}(M)^\ast(t):=\sum_{r\in \NN} \left(\sum_{\chi \in {\rm Irr}(\Delta)}u_r^{\chi}\chi\right) t^r, \text{ where } u_r^{\chi}:=\dim_{\F_p} \tilde{U}(M)_r\lbrack \chi\rbrack.$$
We need to compute~$u_r^{\chi_0^i}$, where~$i\in \Z/q\Z$: this is the number of products of the form
$$\prod_{n=1}^r \prod_{j=1}^{m_n} (x_{n,j} \chi_0)^{m_{n,j}}, \quad \text{where} \quad 0 \leq m_{n,j}\leq p-1,$$
such that
$$\sum_{n=1}^r \sum_{j=1}^{m_n} nm_{n,j}=r \quad \text{ and } \quad \sum_{n=1}^r \sum_{j=1}^{m_n} m_{n,j}\equiv i \pmod q.$$
Notice that the coefficient before~$t^r$ of the polynomial
$$\prod_{n=1}^r{\lbrack 1+\chi_0 t^n+\dots+ \chi_0^{p-1} t^{(p-1)n}\rbrack^{m_n}}$$
is
\begin{equation*}
\sum_{n=1}^r \left( \sum_{j=1}^{m_n} {\chi_0}^{m_{n,j}} \right) t^r, \quad \text{where} \quad 0\leq m_{n,j} \leq p-1, \quad \text{and} \quad \sum_{n=1}^r \sum_{j=1}^{m_n} nm_{n,j}=r.
\end{equation*}

Consequently the coefficient before~$\chi_0^i t^r$ is exactly~$u_r^{\chi_0^i}$.
\medskip

Let us now prove the case~$\AA:=\Z_p$.
\\By the Poincaré-Birkhoff-Witt Theorem, the set~$\tilde{U}(M)$ is the symmetric Lie algebra over~$M$. Similarly to the previous case, we just need to study the case where there exists a unique~$\chi_0$ such that~$M^{\ast}(t):= \sum_n m_n \chi_0.t^n$. We get:
$$\tilde{U}(M)^\ast(t)=\prod_n \left(\frac{1}{1-\chi_0.t}\right)^{m_n^{\chi}}.$$
One deduces the general case.
\end{proof}
 
\begin{proof}[Proof of Formula \eqref{JeLaRep}]
We apply Lemma \ref{Jerep} and Corollary \ref{proofFormmin} to obtain:
\begin{equation*}
\begin{aligned}
gocha^\ast(\AA,t)=\prod_{n\in \NN} \prod_{\chi \in {\rm Irr}(\Delta)}P_\chi(\AA, t^n)^{a_n^{\chi}}, 
\\ \text{ where } \quad P_\chi(\F_p, t):=\frac{1-\chi.t^p}{1-\chi.t},\quad \text{and} \quad P_\chi(\Z_p, t):=\frac{1}{1-\chi.t}
\end{aligned}
\end{equation*}
\end{proof}

\subsection{Proof of Formula \eqref{ap}}

The aim of this part is to prove the following Proposition:
\begin{prop}\label{minfin}
Write~$n=mp^k$ with~$(m,p)=1$ and~$(n,q)=1$, then:
\begin{equation*}
\begin{aligned}
a_n^{\chi}(\F_p)=w_m^{\chi}(\F_p)+w_{mp}^{\chi}(\F_p)+\dots+w_{mp^k}^{\chi}(\F_p), \quad \text{and} \quad a_n^{\chi}(\Z_p)=w_n^{\chi}(\Z_p);
\\ \text{ where } \quad w_n^{\chi}:=\frac{1}{n}\sum_{m|n}\mu(n/m)mb_m^{\chi^{m/n}}\in \Q.
\end{aligned}
\end{equation*}
\end{prop}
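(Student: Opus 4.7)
The plan is to take the formal logarithm of Theorem \ref{Jen} (i.e.\ Formula \eqref{JeLaRep}), extract the $\psi$-isotypic component of the coefficient of $t^N$ for each fixed $\psi\in{\rm Irr}(\Delta)$, and then apply scalar M\"obius inversion pointwise on characters. The hypothesis $\gcd(n,q)=1$ is what makes this work: it implies $\gcd(N/d,q)=1$ for every divisor $d\mid N$, so the power map $\chi\mapsto\chi^{N/d}$ is a bijection of ${\rm Irr}(\Delta)$; this is precisely what gives meaning to the non-standard notation $\psi^{d/N}$, interpreted as the unique $\chi$ with $\chi^{N/d}=\psi$.

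First I would compute the logarithm of the product formula. Using $\log P_\chi(\Z_p,t^n)=-\log(1-\chi t^n)=\sum_{r\geq 1}\chi^r t^{nr}/r$ and collecting the coefficient of $t^N$, the $\psi$-isotypic part gives
\begin{equation*}
N\, b_N^\psi(\Z_p) \;=\; \sum_{d\mid N} d\, a_d^{\psi^{d/N}}(\Z_p).
\end{equation*}
For $\AA=\F_p$, one expands $\log P_\chi(\F_p,t^n)=\log(1-\chi t^{np})-\log(1-\chi t^n)$; the extra term produces
\begin{equation*}
N\, b_N^\psi(\F_p) \;=\; \sum_{d\mid N} d\bigl[\,a_d^{\psi^{d/N}}(\F_p)\,-\,\varepsilon_p(d)\, a_{d/p}^{\psi^{d/N}}(\F_p)\,\bigr],
\end{equation*}
where $\varepsilon_p(d)=1$ if $p\mid d$ and $0$ otherwise.

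Next I would apply M\"obius inversion. The crucial algebraic identity is $(\psi^{m/N})^{d/m}=\psi^{d/N}$ for $d\mid m\mid N$, which keeps the base character $\psi$ undisturbed under iterated substitution. Substituting the formula for $b_m^{\psi^{m/N}}$ into $w_N^\psi=\frac{1}{N}\sum_{m\mid N}\mu(N/m)\,m\, b_m^{\psi^{m/N}}$ and interchanging the order of summation, the inner sum $\sum_{m\,:\,d\mid m\mid N}\mu(N/m)$ collapses to $[d=N]$ via the standard identity $\sum_{k\mid M}\mu(k)=[M=1]$. This immediately yields $w_N^\psi(\Z_p)=a_N^\psi(\Z_p)$, and $w_N^\psi(\F_p)=a_N^\psi(\F_p)-\varepsilon_p(N)\,a_{N/p}^\psi(\F_p)$. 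Writing $n=mp^k$ with $(m,p)=1$ and telescoping the recurrence $a_{mp^j}^\psi=w_{mp^j}^\psi+a_{mp^{j-1}}^\psi$ (for $1\leq j\leq k$) down to the base case $a_m^\psi=w_m^\psi$ produces the first identity.

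The main obstacle is bookkeeping of the character twist $\psi^{d/N}$: one must carefully verify that this exponent behaves associatively as $d$ ranges over divisors of $N$, and that the whole inversion can be carried out unambiguously inside $R[\Delta]\otimes_\Z\Q$. Once the associativity $(\psi^{m/N})^{d/m}=\psi^{d/N}$ is pinned down, the argument is a character-twisted version of the scalar M\"obius inversion of \cite[Theorem~$2.9$ and Lemma~$2.10$]{Minac} applied pointwise in each $\psi$-eigencomponent.
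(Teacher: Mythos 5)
Your proposal is correct and matches the paper's own argument: take the logarithm of Theorem \ref{Jen}, extract the $\psi$-isotypic component of the $t^N$-coefficient to obtain the analogue of Proposition \ref{bw}, then apply M\"obius inversion (relying, as the paper does implicitly, on $\psi^p=\psi$ since $q\mid p-1$, which is what makes your $\psi^{d/N}$ agree with $\psi^{(d/p)/N}$ in the correction term). The only organizational difference is that you replace the paper's induction on $n$ in the $p\mid n$ case by a direct double-sum interchange followed by telescoping of $a_{mp^j}^\psi=w_{mp^j}^\psi+a_{mp^{j-1}}^\psi$, which is an equivalent reformulation.
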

This is Formula \eqref{ap} given in our introduction.

The strategy of the proof is to transform the product formula given by \eqref{JeLaRep}, into a sum in~$(R\lbrack \Delta \rbrack \otimes_\Z \Q) \lbrack \lbrack t \rbrack \rbrack$.

\begin{defi}[$\log$ function]
If~$P\in 1+tR\lbrack \Delta \rbrack \lbrack \lbrack t\rbrack \rbrack$, we define:
$$\log(P)(t):=-\sum_n \frac{(1-P(t))^n}{n}\in (R\lbrack \Delta \rbrack \otimes_\Z \Q)\lbrack \lbrack t \rbrack \rbrack.$$ 
\end{defi}

\begin{rema}
Note that the~$\log$ function enjoys the following properties:
\begin{enumerate}[\quad (i)]
\item If~$P$ and~$Q$ are in~$1+tR\lbrack \Delta \rbrack \lbrack \lbrack t\rbrack \rbrack$, then:
$$\log(PQ)=\log(P)+\log(Q),\quad \text{ and }$$
$$\log(1/P)=-\log(P).$$
\item If~$u$ is in~$tR\lbrack \Delta \rbrack \lbrack\lbrack t \rbrack\rbrack$, then 
$$\log\left(\frac{1}{1-u}\right)=\sum_{\nu=1}^\infty \frac{u^\nu}{\nu}.$$ 
\end{enumerate}
\end{rema}
Define the sequence~$(b_n^{\chi}(\AA))_{n\in \NN} \in \Q^{\NN}$ by:
\begin{equation*}
\log(gocha^\ast(\AA,t))=\sum_{n\geq 1} \left(\sum_{\chi \in {\rm Irr}(\Delta)} b_n^{\chi}(\AA) \chi\right) t^n.
\end{equation*}

\begin{prop}\label{bw}
If~$(n,q)=1$, we infer:
\begin{equation*}
b_n^{\chi^n}(\F_p):=\frac{1}{n}\left(\sum_{m|n}ma_m^{\chi^m}(\F_p)-\sum_{rp|n}rpa_r^{\chi^r}(\F_p)\right), \quad \text{and} \quad b_n^{\chi^n}(\Z_p):=\frac{1}{n} \sum_{m|n}ma_m^{\chi^m}(\Z_p).
\end{equation*}

\end{prop}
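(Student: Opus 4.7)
The plan is to take the logarithm of Formula \eqref{JeLaRep}, expand each factor as a formal power series in $(R[\Delta]\otimes_\Z\Q)[[t]]$, extract the coefficient of $t^n$, and then pick out the component along the specific character $\chi^n$.

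First I would expand $\log P_\chi(\AA,t)$ using the standard identity $\log(1/(1-\chi t))=\sum_{\nu\geq 1}\chi^\nu t^\nu/\nu$. In the $\Z_p$ case this directly gives $\sum_{\nu\geq 1}\chi^\nu t^\nu/\nu$; in the $\F_p$ case it gives the difference $\sum_{\nu\geq 1}\chi^\nu t^\nu/\nu-\sum_{\nu\geq 1}\chi^\nu t^{p\nu}/\nu$. Applying $\log$ to \eqref{JeLaRep} and collecting the coefficient of $t^n$ yields, for $\AA=\Z_p$,
$$\sum_{\psi}b_n^\psi\,\psi=\frac{1}{n}\sum_{\psi}\sum_{m\mid n} m\,a_m^\psi\,\psi^{n/m},$$
with an additional correction term $-\frac{1}{n}\sum_{\psi}\sum_{rp\mid n} rp\,a_r^\psi\,\psi^{n/(rp)}$ in the $\F_p$ case.

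The central step is then to extract the coefficient of the specific character $\chi^n$ from each of these sums. Recall that ${\rm Irr}(\Delta)$ is cyclic of order $q$. The hypothesis $(n,q)=1$ implies $\gcd(n/m,q)=1$ for every divisor $m$ of $n$, so the map $\psi\mapsto\psi^{n/m}$ is a bijection of ${\rm Irr}(\Delta)$; writing $\psi=\chi^k$, the equation $\psi^{n/m}=\chi^n$ amounts to $k\cdot(n/m)\equiv n\pmod q$, i.e.\ $k\equiv m\pmod q$, so the unique solution is $\psi=\chi^m$. This yields the $\Z_p$ formula. For the $\F_p$ correction term, the same argument gives $\psi=\chi^{rp}$; but because $q\mid p-1$, i.e.\ $p\equiv 1\pmod q$, we have $\chi^{rp}=\chi^r$, which converts the correction into $\sum_{rp\mid n}rp\,a_r^{\chi^r}$, matching the proposition exactly.

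The main obstacle is purely the character bookkeeping: one has to track which power of $\chi$ appears alongside $a_m^\psi$ in the expansion, and verify that the two arithmetic hypotheses play distinct roles. The condition $(n,q)=1$ is what makes the bijection $\psi\mapsto\psi^{n/m}$ available and hence what forces $\psi=\chi^m$; the congruence $p\equiv 1\pmod q$ is what collapses $\chi^{rp}$ to $\chi^r$ in the $\F_p$ correction. Beyond \eqref{JeLaRep} and these two elementary observations, the proof is a formal manipulation of series in $(R[\Delta]\otimes_\Z\Q)[[t]]$, with no deeper algebraic input required.
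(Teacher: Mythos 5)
Your proposal is correct and follows essentially the same route as the paper: take the logarithm of Formula \eqref{JeLaRep}, expand, collect coefficients of $t^n$, and extract the $\chi^n$-component using the bijection $\psi\mapsto\psi^{n/m}$ guaranteed by $(n,q)=1$. The only cosmetic difference is where $p\equiv 1\pmod q$ enters: the paper writes the $\F_p$-correction term as $(\chi\,t^w)^{rp}$, so the exponent $\chi^{rp}$ collapses to $\chi^{n/w}$ algebraically (with the congruence already absorbed into the identification $1+\chi t^n+\cdots+\chi^{p-1}t^{(p-1)n}=P_\chi(\F_p,t^n)$), whereas you invoke $\chi^{rp}=\chi^r$ explicitly at the end — a helpful clarification of exactly where each hypothesis is used, but not a genuinely different argument.
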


\begin{proof}
Let us just prove the case~$\AA:=\F_p$ (the case~$\AA:=\Z_p$ is similar).

First, Formula \eqref{JeLaRep} gives us:
$$gocha^\ast(\F_p, t)=\prod_{n\in \NN} \prod_{\chi \in {\rm Irr}(\Delta)}\left(\frac{1-\chi.t^{np}}{1-\chi.t^n}\right)^{a_n^{\chi}}.$$ 
Let us take the logarithm to obtain:
$$\log(gocha^\ast(\F_p, t))=\sum_n \sum_{\chi \in {\rm Irr}(\Delta)} a_n^{\chi}\left\lbrack \log(1-(\chi.t^n)^{p})-\log(1-\chi.t^n)\right\rbrack,$$
so that
$$\sum_{n\in \NN} \left(\sum_{\chi \in {\rm Irr}(\Delta)} b_n^{\chi} \chi\right)t^n=\sum_{w=1}^\infty \sum_{\chi \in {\rm Irr}(\Delta)} a_{w}^{\chi}\left( \sum_{\nu=1}^\infty \frac{(\chi.t^w)^{\nu}}{\nu}-\sum_{r=1}^\infty \frac{(\chi.t^w)^{rp}}{r}\right),$$
from which we conclude
$$\sum_{n=1}^\infty n \left(\sum_{\chi \in {\rm Irr}(\Delta)} b_n^{\chi} \chi\right) t^n = \sum_{n=1}^\infty \left( \sum_{\chi \in {\rm Irr}(\Delta)} (\sum_{m|n}ma_m^{\chi}\chi^{n/m}-\sum_{rp|n}rp a_{r}^{\chi}\chi^{n/r} )\right) t^n.$$
Then we infer:
$$nb_n^{\chi^n} = \sum_{m|n}ma_m^{\chi^m}-\sum_{rp|n}rp a_{r}^{\chi^r}.$$
\end{proof}

\begin{proof}[Proof of Proposition \ref{minfin}]
Again, we just prove the case~$\AA:=\F_p$.
\\We are inspired by the proof of \cite[Theorem~$2.9$]{Minac}.
\\First, we assume~$(n,p)=1$, then by Proposition \ref{bw}, we obtain:
$$nb_n^{\chi^n}=\sum_{m|n} ma_m^{\chi^m}.$$
So, using the Möbius inversion Formula, we obtain:
\begin{equation*}
a_n^{\chi^n}=w_n^{\chi^n}, \quad \text{ thus } \quad a_n^{\chi}=w_n^{\chi}.
\end{equation*}

Now, let us assume~$p$ divides~$n$. We show by induction on~$n$ that:
\begin{equation}\label{ind}
a_n^{\chi^n}=a_{n/p}^{\chi^{n/p}}+ w_n^{\chi^n} \tag{$\ast$}
\end{equation}
\begin{itemize}
\item[$\bullet$]
If~$n=p$, then by Proposition \ref{bw}, we have:~$pb_{p}^{\chi^p}=pa_{p}^{\chi^p}+a_{1}^{\chi}-pa_{1}^{\chi}$. So, 
$$pw_{p}^{\chi^p}=pb_{p}^{\chi^p}-b_{1}^{\chi}=pa_{p}^{\chi^p}-pa_{1}^{\chi}.$$
Therefore,~$a_{p}^{\chi^p}=a_{1}^{\chi}+w_{p}^{\chi^p}$.
\item[$\bullet$] 
Let us fix~$n$, an integer such that~$p|n$, and assume equation \eqref{ind} is true for all~$m$ such that~$m\neq n$ and~$p|m|n$. Then, following Proposition \ref{bw}, we have:
\begin{flalign*}
nb_n^{\chi^n} &= \sum_{m|n} ma_m^{\chi^m} -\sum_{rp|n} rpa_{r}^{\chi^r} &
\\ &= \sum_{m|n; (m,p)=1} ma_m^{\chi^m} + \sum_{p|m|n} m\left(a_m^{\chi^m}-a_{m/p}^{\chi^{m/p}}\right) &
\\ &=\sum_{m|n; (m,p)=1} mw_m^{\chi^m} +\sum_{p|m|n; m\neq n} mw_m^{\chi^m}+ n\left( a_n^{\chi^n}-a_{n/p}^{\chi^{n/p}}\right) &
\\ &= \sum_{m|n; m\neq n } mw_m^{\chi^m} + n\left( a_n^{\chi^n}-a_{n/p}^{\chi^{n/p}} \right). & 
\end{flalign*}
Moreover, by the Möbius inversion formula, we have:
$$nb_n^{\chi^n}=\sum_{m|n} mw_m^{\chi^m}.$$
Therefore, we obtain:
$$nw_n^{\chi^n}=n\left( a_n^{\chi^n} -a_{n/p}^{\chi^{n/p}} \right).$$
\end{itemize}

\end{proof}

\begin{rema}
Formula \eqref{ap} was already given for groups defined by one quadratic relation by Filip \cite[Formula~$(4.7)$]{Fi} (for~$\C$-representations in a geometrical context) and by Stix \cite[Formula~$(14.16)$]{stix2013rational} (in a Galois-theoretical context). Additionally, they computed explicitely the coefficients~$b_n^{\chi}(\Z_p)$. We discuss this analogy in Theorem \ref{Weigen}.
\end{rema}

\begin{rema}
Let us reformulate \cite[Question~$2.13$]{Minac}, asked by Min{\'a}{\v c}-Rogelstad-Tân, in our equivariant context:
\begin{quote}
\emph{Do we have for every integer~$n$ and every irreducible character~$\chi$, the equality~$c_n^\chi(\Z_p)=c_n^\chi(\F_p)?$}
\end{quote}

Later in this paper, we give a positive answer to this question, when~$G$ is finitely presented and~$\cd(G)\leq 2$ (see Theorem \ref{MRTsol}).
\end{rema}

\section{Infinite dimensional eigenspaces of~$\Ll(\AA, G)$} \label{assy}

The goal of this part is to study infinite dimensional eigenspaces (as a free~$\AA$-module) of~$$\Ll(\AA, G):=\bigoplus_{n\in \NN} \Ll_n(\AA,G),\quad \text{where} \quad \Ll_n(\AA,G):=G_n(\AA)/G_{n+1}(\AA).$$
For this purpose, we introduce~$\chi_0$-filtrations.

\subsection{Definition of~$\chi_0$-filtrations}\label{DfilLamb}

From now on, we make no distinction between~$\Z/q\Z$ and the set~$[\![ 1; q]\!]$. Observe the following isomorphism of groups, which depends on the choice of a fixed non-trivial irreducible character~$\chi_0$: 
$$\psi_{\chi_0}\colon ({\rm Irr}(\Delta);\otimes) \to (\Z/q\Z;+); \quad \chi_0^i \mapsto i.$$

Recall that~$\phi_\AA$ denotes the Magnus' isomorphism introduced in \eqref{Magnus iso}. We define~$E_{\chi_0}(\AA)$ as the~$\AA$-algebra~$\AA\langle\langle X^{\chi}_j ; \chi\in {\rm Irr}(\Delta), 1\leq j \leq d^{\chi} \rangle \rangle$ filtered by~$\deg(X^{\chi}_j)=\psi_{\chi_0}(\chi)$, and~$\{E_{\chi_0,n}(\AA)\}_{n\in \NN}$ as its filtration: called the~$\chi_0$-filtration of~$Al(\AA, F)$. We introduce
$$\E_{\chi_0}(\AA):=\bigoplus_{n\in \NN} \E_{\chi_0,n}(\AA), \quad \text{where} \quad \E_{\chi_0,n}(\AA):=E_{\chi_0,n}(\AA)/E_{\chi_0,n+1}(\AA).$$
Write~$I_{\chi_0}(\AA,R)$ for the two-sided ideal generated by~$\{\phi_\AA(r-1); r\in R\}\subset E_{\chi_0}(\AA)$, endowed with filtration~$\{I_{\chi_0,n}(\AA,R):=I_{\chi_0}(\AA,R)\cap E_{\chi_0,n}(\AA)\}_{n\in \NN}$; and~$E_{\chi_0}(\AA, G)$ the quotient filtered algebra~$E_{\chi_0}(\AA)/I_{\chi_0}(\AA,R)$. 
\\Define the following~$\AA$-module:
\begin{equation*}
\E_{\chi_0}(\AA,G):=\bigoplus_{n\in \NN}\E_{\chi_0,n}(\AA,G),\quad \text{where} \quad \E_{\chi_0,n}(\AA,G):=E_{\chi_0,n}(\AA,G)/E_{\chi_0,n+1}(\AA,G).
\end{equation*}
Introduce:
\begin{equation*}
\begin{aligned}
G_{\chi_0, n}(\AA):= \{g\in G; \phi_\AA(g-1)\in E_{\chi_0, n}(\AA, G)\}, \quad \text{and}
\\ \Ll_{\chi_0}(\AA,G):=\bigoplus_{n\in \NN} \Ll_{\chi_0,n}(\AA,G), \quad \text{where} \quad \Ll_{\chi_0,n}(\AA,G):= G_{\chi_0,n}(\AA)/G_{\chi_0,n+1 }(\AA).
\end{aligned}
\end{equation*}

We always assume that the $\AA$-Lie algebra $\Ll_{\chi_0}(\AA,G)$ is \textbf{torsion-free} over $\AA$. 

\begin{lemm}\label{reslie}
The set~$\E_{\chi_0}(\AA, G)$ is a graded locally finite,~$\AA$-universal Lie algebra of~$\Ll_{\chi_0}(\AA, G)$. Consequently, the graded~$\AA$-Lie algebra~$\E_{\chi_0}(\AA,G)$ is torsion-free.
\end{lemm}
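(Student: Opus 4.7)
The plan is to mimic the proof of Corollary \ref{proofFormmin} verbatim, adapting it to the weighted $\chi_0$-filtration. The essential point is that reassigning the degree of $X_j^\chi$ from $1$ to $\psi_{\chi_0}(\chi)$ only coarsens the filtration; it does not disturb any of the multiplicative or Lie-theoretic identities used in the classical Jennings--Lazard--Quillen theorem. So the entire proof machinery carries over, with "degree" replaced everywhere by "$\chi_0$-weighted degree".

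First, I would verify that $\{G_{\chi_0,n}(\AA)\}_{n\in\NN}$ is a (restricted, when $\AA=\F_p$) Lie filtration of $G$. By construction $E_{\chi_0,\bullet}(\AA)$ is an algebra filtration, and $I_{\chi_0}(\AA,R)$ is compatible with it, so $E_{\chi_0,\bullet}(\AA,G)$ is also an algebra filtration. The identities
\[
\phi_\AA([g,h]-1)\equiv \phi_\AA(g-1)\phi_\AA(h-1)-\phi_\AA(h-1)\phi_\AA(g-1)\pmod{\text{higher terms}},
\]
together with the binomial expansion of $g^p-1$, then force
$[G_{\chi_0,m},G_{\chi_0,n}]\subset G_{\chi_0,m+n}$ and, in the $\F_p$ case, $G_{\chi_0,n}^{\,p}\subset G_{\chi_0,np}$. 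This makes $\Ll_{\chi_0}(\AA,G)$ a well-defined graded locally finite (restricted) $\AA$-Lie algebra.

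Next, I would establish the universal enveloping identification
\[
\E_{\chi_0}(\AA,G)\;\simeq\;\widetilde{U}\bigl(\Ll_{\chi_0}(\AA,G)\bigr).
\]
For $\AA=\F_p$, this is the weighted version of \cite[Appendice A, Th\'eor\`eme 2.6]{LAZ}: the Magnus isomorphism $\phi_\AA$ identifies $\E_{\chi_0}(\AA)$ with the free associative $\AA$-algebra on generators of the prescribed weighted degrees, and Lazard's argument -- which relies only on the filtration being a multiplicative $p$-restricted Lie filtration, not on the particular weights assigned to generators -- applies unchanged after replacing ``degree'' by ``$\chi_0$-weighted degree'' throughout. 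For $\AA=\Z_p$, the same observation applies to the adaptation of Hartl's argument used in the proof of Corollary \ref{proofFormmin}: the $\Z_p$-module structure and the comparison $\grad(E_{\chi_0}(\Z_p,G))\simeq\grad(\Z_p[G])$ (filtered now by the $\chi_0$-weighted powers of the augmentation ideal) go through with the same bookkeeping.

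Finally, the torsion-freeness assertion is immediate: by hypothesis $\Ll_{\chi_0}(\AA,G)$ is torsion-free over $\AA$, and Theorem \ref{PBW} applied to this graded $\AA$-Lie algebra then gives an explicit $\AA$-basis of $\widetilde{U}(\Ll_{\chi_0}(\AA,G))\simeq\E_{\chi_0}(\AA,G)$, forcing it to be torsion-free. The only real obstacle is the bookkeeping check that the weighted analogues of Lazard's and Hartl's theorems genuinely hold; this is routine but has to be spelled out carefully because the standard references state them only for the unweighted case.
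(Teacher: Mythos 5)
Your proof is correct and follows exactly the route the paper takes: the paper's own proof of Lemma \ref{reslie} is the single line ``This is similar to the proof of Corollary \ref{proofFormmin},'' and your write-up spells out precisely what that similarity amounts to (Lazard for $\F_p$, Hartl for $\Z_p$, then Theorem \ref{PBW} plus torsion-freeness of $\Ll_{\chi_0}(\AA,G)$ for the final claim). The preliminary check that $\{G_{\chi_0,n}(\AA)\}$ is a (restricted) Lie filtration is left implicit in the paper but is a sensible thing to make explicit; it does not change the strategy.
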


\begin{proof}
This is similar to the proof of Corollary \ref{proofFormmin}.
\end{proof}

Since $G$ is finitely generated, we define:
\begin{equation*}
\begin{aligned}
gocha_{\chi_0}(\AA,t):= \sum_n c_{\chi_0, n}(\AA)t^n, \quad \text{where} \quad c_{\chi_0, n}(\AA):= \rk_\AA\E_{\chi_0,n}(\AA,G),
\\\quad \text{and} \quad a_{\chi_0, n}(\AA):= \rk_{\AA}(G_{\chi_0, n}(\AA)/G_{\chi_0, n+1}(\AA)).
\end{aligned}
\end{equation*}

\subsection{Properties of~$\chi_0$-filtrations}
This subpart aims to develop various properties of~$\chi_0$-filtations.

\begin{lemm}\label{delact}
The modules~$\E_{\chi_0}(\AA,G)$ and~$\Ll_{\chi_0}(\AA, G)$ are graded locally finite~$\AA\lbrack \Delta \rbrack$-modules. More precisely, we have:
\begin{equation*}
\begin{aligned}
\rk_\AA\E_{\chi_0, n}(\AA,G)\lbrack\chi\rbrack= & c_{\chi_0, n}(\AA) \delta_n^{\psi_{\chi_0}(\chi)},
\\ \rk_\AA\Ll_{\chi_0,n}(\AA,G)\lbrack\chi\rbrack= & a_{\chi_0, n}(\AA) \delta_n^{\psi_{\chi_0}(\chi)},
\end{aligned}
\end{equation*}

where~$\quad \delta_n^{\psi_{\chi_0}(\chi)}=1\quad$ if~$n\equiv \psi_{\chi_0}(\chi) \pmod q, \quad$ otherwise~$\quad \delta_n^{\psi_{\chi_0}(\chi)}=0$. 
\end{lemm}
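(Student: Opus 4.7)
The plan is to transfer the $\Delta$-equivariance through the Magnus identification and then use PBW. The one non-cosmetic ingredient will be to arrange the generators so that the $\Delta$-action is compatible with the $\chi_0$-filtration, after which everything is a monomial count.

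First, I would refine the choice of lifts: the basis $\{x_j^{\chi}\}$ of $\Ll_1(\AA,G)[\chi]$ can be lifted to \emph{genuine} $\Delta$-eigenvectors in $F$, so that $\delta(x_j^{\chi}) = (x_j^{\chi})^{\chi(\delta)}$ holds in $F$ itself (not merely modulo the commutator). This is the step where $(|\Delta|,p)=1$ enters, exactly as in \cite[Lemma~$2.15$]{hajir2019prime}: the $\Delta$-action on each successive quotient of the Zassenhaus filtration of $F$ is semisimple, so one inductively produces the desired eigenvector lift.

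Second, I would transport this identity via the Magnus isomorphism $\phi_\AA$ to compute
\begin{equation*}
\delta(X_j^{\chi}) = (1+X_j^{\chi})^{\chi(\delta)}-1 = \chi(\delta) X_j^{\chi} + \sum_{k\geq 2}\binom{\chi(\delta)}{k}(X_j^{\chi})^k.
\end{equation*}
Each correction term $(X_j^{\chi})^k$ has $\chi_0$-weight $k\,\psi_{\chi_0}(\chi)$, which strictly exceeds $\psi_{\chi_0}(\chi)$ as soon as $\chi\neq\mathds{1}$; for $\chi=\mathds{1}$ the coefficient $\chi(\delta)=1$ collapses the binomial expansion to $X_j^{\mathds{1}}$ itself. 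In either case $\delta$ preserves the $\chi_0$-filtration on $E_{\chi_0}(\AA)$, and the induced action on the class of a monomial $X_{j_1}^{\chi_{i_1}}\cdots X_{j_r}^{\chi_{i_r}}$ of total $\chi_0$-weight $n$ in $\E_{\chi_0,n}(\AA)$ is scalar multiplication by $\prod_k \chi_{i_k}(\delta)=\chi_0^{n}(\delta)$. Hence $\E_{\chi_0,n}(\AA)$ sits entirely in the eigenspace attached to the unique $\chi$ with $\psi_{\chi_0}(\chi)\equiv n \pmod q$.

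Third, since $R$ is $\Delta$-stable as a normal subgroup coming from a $\Delta$-invariant presentation, the ideal $I_{\chi_0}(\AA,R)$ is $\Delta$-stable and compatible with the $\chi_0$-filtration; the quotient $\E_{\chi_0,n}(\AA,G)$ therefore inherits the same eigenspace concentration, proving the first formula. For the second formula I would invoke Lemma \ref{reslie}: $\E_{\chi_0}(\AA,G)$ is the universal enveloping $\AA$-Lie algebra of $\Ll_{\chi_0}(\AA,G)$, and the resulting PBW embedding $\Ll_{\chi_0,n}(\AA,G)\hookrightarrow \E_{\chi_0,n}(\AA,G)$ is manifestly $\Delta$-equivariant, so concentration transfers from the enveloping algebra back to the Lie algebra.

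The main obstacle is the first step, namely the choice of $\Delta$-eigenvector lifts compatible with the $\chi_0$-filtration; without them, $\delta(X_j^{\chi})-\chi(\delta)X_j^{\chi}$ only lies in the squared \emph{standard} augmentation ideal, whose pieces do not in general have larger $\chi_0$-weight. If such lifts proved subtle to construct directly, a fallback would be to decompose $\delta(X_j^{\chi})-\chi(\delta)X_j^{\chi}$ into $\Delta$-eigencomponents level by level in the standard filtration and average over $\Delta$ to modify the lifts until the higher-order tail sits in the prescribed $\chi_0$-weight, thereby recovering $\Delta$-stability of the $\chi_0$-filtration.
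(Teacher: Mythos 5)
Your proof follows the same route as the paper: observe that $\E_{\chi_0,n}(\AA)$ sits entirely in the $\chi_0^n$-eigenspace by computing the $\Delta$-action on monomials, transport this concentration to the quotient $\E_{\chi_0,n}(\AA,G)$, and descend to $\Ll_{\chi_0,n}(\AA,G)$ via the PBW injection furnished by Lemma~\ref{reslie}. Your explicit Magnus computation of $\delta(X_j^{\chi})$ and the eigenvector-lift discussion simply make visible what the paper asserts tersely (and delegates to the lifting lemma invoked in the Notations), so the extra care you take over the lift is warranted but is not a genuinely different ingredient.
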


\begin{proof}
Let us denote by~$\I_{\chi_0,n}(\AA,R):=I_{\chi_0,n}(\AA,R)/I_{\chi_0,n+1}(\AA,R)$.
Remind by \cite[Lemma~$2.15$]{hajir2019prime}, that~$\Delta\subset Aut(F)$ and~$\Delta(R)=R$.
So~$\E_{\chi_0}(\AA)$ is a graded locally finite~$\AA\lbrack \Delta\rbrack$-module, and~$\I_{\chi_0,n}(\AA,R)$ is stable by~$\Delta$.
By \cite[Chapitre I, Résultat~$2.3.8.2$]{LAZ}, we have the following exact sequence:
$$0\to \I_{\chi_0,n}(\AA,R)\to \E_{\chi_0,n}(\AA) \to  \E_{\chi_0,n}(\AA, G)\to 0.$$
Then~$\E_{\chi_0}(\AA, G)$ and~$\Ll_{\chi_0}(\AA, G)$ are graded locally finite~$\AA\lbrack \Delta \rbrack$-modules. Let us now study more precisely the~$\AA\lbrack \Delta \rbrack$-module structure of~$\E_{\chi_0}(\AA, G)$ and~$\Ll_{\chi_0}(\AA, G)$. 

For the structure of~$\E_{\chi_0}(\AA, G)$: take~$u\in \E_{\chi_0,n}(\AA)$ and write~$u:=X^{\chi_0^{i_1}}_{j_1}\dots X^{\chi_0^{i_u}}_{j_u},$ with~$i_1+\dots+i_u=n$. Therefore, for every~$\delta \in \Delta$,~$\delta(u)=\chi_0^n(\delta) u$. Then, we infer for every~$\chi$:
\begin{equation}\label{proofdelact}
\rk_\AA\E_{\chi_0,n}(\AA)\lbrack\chi\rbrack= \rk_\AA\E_{\chi_0,n}(\AA) \delta_n^{\psi_{\chi_0}(\chi)}. \tag{$\ast \ast$}
\end{equation}
Since~$\rk_\AA\E_{\chi_0,n}(\AA)\lbrack\chi\rbrack\geq \rk_\AA\E_{\chi_0,n}(\AA, G)\lbrack\chi\rbrack,$ we conclude by Equation \eqref{proofdelact} that:
$$\rk_\AA \E_{\chi_0,n}(\AA,G)\lbrack\chi\rbrack= c_{\chi_0, n} \delta_n^{\psi_{\chi_0}(\chi)}.$$

For the structure of~$\Ll_{\chi_0}(\AA, G)$: note by Lemma \ref{reslie} that~$\E_{\chi_0}(\AA, G)$ is a graded locally finite~$\AA\lbrack \Delta \rbrack$-module, universal~$\AA$-Lie algebra of~$\Ll_{\chi_0}(\AA, G)$. Hence for every~$\chi$, and every~$n$:
$$\rk_\AA\E_{\chi_0,n}(\AA, G)\lbrack\chi\rbrack\geq \rk_\AA \Ll_{\chi_0,n}(\AA,G)\lbrack\chi\rbrack.$$
This allows us to conclude for every~$\chi$:
$$\rk_\AA \Ll_{\chi_0,n}(\AA,G)\lbrack\chi\rbrack= a_{\chi_0, n} \delta_n^{\psi_{\chi_0}(\chi)}.$$
\end{proof}

Now, let us compare~$(c_{\chi_0, n})_{n\in \NN}$,~$(a_{\chi_0, n})_{n\in \NN}$,~$(c_n^{\chi})_{n\in \NN}$ and~$(a_n^{\chi})_{n\in \NN}$.
\begin{prop}\label{comp2}
The following inequalities hold:
\begin{equation}\label{comp21}
c_{\chi_0, qn+i}\leq \sum_{j=n}^{qn+i}c_j^{\chi_0^i}, \quad a_{\chi_0, qn+i}\leq \sum_{j=n}^{qn+i}a_j^{\chi_0^i},
\end{equation}

\begin{equation}\label{comp22}
c_n^{\chi}\leq  \sum_{k=\lceil{\frac{n-\psi_{\chi_0}(\chi)}{q}} \rceil}^{\lfloor{n-\psi_{\chi_0}(\chi)/q} \rfloor} c_{\chi_0, qk+\psi_{\chi_0}(\chi)}, \quad a_n^{\chi}\leq  \sum_{k=\lceil{\frac{n-\psi_{\chi_0}(\chi)}{q}} \rceil}^{\lfloor{n-\psi_{\chi_0}(\chi)/q} \rfloor} a_{\chi_0, qk+\psi_{\chi_0}(\chi)}.
\end{equation}
\end{prop}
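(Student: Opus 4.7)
The plan is to prove both inequalities simultaneously by introducing the bigraded refinement associated with the pair of filtrations. For each pair $(j, N)$ of non-negative integers, I would define the bihomogeneous piece
\begin{equation*}
\E_{j, N}(\AA, G) := \frac{E_j(\AA, G) \cap E_{\chi_0, N}(\AA, G)}{\bigl(E_{j+1}(\AA, G) \cap E_{\chi_0, N}(\AA, G)\bigr) + \bigl(E_j(\AA, G) \cap E_{\chi_0, N+1}(\AA, G)\bigr)}.
\end{equation*}
The bihomogeneous part of the free algebra $E(\AA)$ spanned by monomials of standard degree $j$ and $\chi_0$-degree $N$ lies entirely in the $\chi_0^N$-eigenspace, and since the ideal $I(\AA, R)$ is $\Delta$-stable one argues, exactly as in Lemma \ref{delact}, that $\E_{j, N}(\AA, G)$ is an $\AA\lbrack \Delta \rbrack$-module of pure character $\chi_0^N$, vanishing outside the range $j \leq N \leq jq$ (this last point uses the sandwich $E_n(\AA, G) \subset E_{\chi_0, n}(\AA, G) \subset E_{\lceil n/q \rceil}(\AA, G)$).

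Next, filtering $\E_{\chi_0, N}(\AA, G)$ by its intersection with $E_j(\AA, G)$ gives a decreasing filtration whose successive quotients identify with the $\E_{j, N}(\AA, G)$, so
\begin{equation*}
c_{\chi_0, N}(\AA) = \sum_{j = \lceil N/q \rceil}^{N} \dim_\AA \E_{j, N}(\AA, G).
\end{equation*}
Dually, the $\chi_0$-filtration on $\E_j(\AA, G)$ has associated graded pieces the various $\E_{j, N'}(\AA, G)$; restricting to the $\chi_0^i$-eigenspace selects exactly the summands with $N' \equiv i \pmod q$ thanks to the character identification, yielding
\begin{equation*}
c_j^{\chi_0^i}(\AA) = \sum_{\substack{j \leq N' \leq jq \\ N' \equiv i \pmod q}} \dim_\AA \E_{j, N'}(\AA, G).
\end{equation*}

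Inequality \eqref{comp21} with $N = qn+i$ is now immediate: each $\dim_\AA \E_{j, N}$ appearing in the first identity is only one of the summands defining $c_j^{\chi_0^i}$ (so $\dim_\AA \E_{j, N} \leq c_j^{\chi_0^i}$), and the range $\lceil N/q \rceil \leq j \leq N$ is contained in $[n, qn+i]$. Inequality \eqref{comp22} is dual: with $j = n$ fixed, each $\dim_\AA \E_{n, qk+i}$ is bounded by $c_{\chi_0, qk+i}$, and the range of $k$ in the statement is forced by the nonvanishing condition $n \leq qk+i \leq nq$. The verbatim argument with the bifiltered subgroup sequence $G_j \cap G_{\chi_0, N}$ and the graded Lie algebra $\Ll$ (together with its bihomogeneous pieces $\Ll_{j, N}(\AA, G)$) in place of $E$ and $\E$ handles the inequalities for $a_{\chi_0, n}$ and $a_n^\chi$. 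The main technical obstacle I anticipate is the careful verification that $\E_{j, N}(\AA, G)$ and its Lie analog are well-defined $\AA[\Delta]$-modules of pure character $\chi_0^N$; this should reduce cleanly to the free-algebra version (already implicit in the proof of Lemma \ref{delact}) and the $\Delta$-stability of both filtrations.
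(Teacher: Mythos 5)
Your proposal is correct, and it is a systematic formalization of what the paper does more tersely. The paper's proof also tracks both the standard degree and the $\chi_0$-degree of monomials: it picks a spanning family of monomials for $\E_{\chi_0,qn+i}(\AA,G)$, observes each has $\Delta$-eigenvalue $\chi_0^i$, and bounds its standard degree $r_u$ by $n\leq r_u\leq qn+i$ (and dually for \eqref{comp22}). What the paper leaves implicit — and what your bigrading $\E_{j,N}(\AA,G)$ makes precise — is why this monomial bookkeeping actually yields a dimension inequality across the two different associated graded objects. You supply exactly the missing scaffolding: filtering $\E_{\chi_0,N}(\AA,G)$ by intersections with the degree filtration gives graded pieces $\E_{j,N}$, each of which is a subquotient of $\E_j(\AA,G)\lbrack\chi_0^N\rbrack$ by semisimplicity of $\Delta$, so that $c_{\chi_0,N}=\sum_j\dim\E_{j,N}$ and $c_j^{\chi_0^i}=\sum_{N'\equiv i}\dim\E_{j,N'}$, from which both inequalities drop out by dropping summands. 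The "main technical obstacle" you flag — that $\E_{j,N}(\AA,G)$ carries pure character $\chi_0^N$ — is in fact handled by Lemma \ref{delact} already proved in the paper, since $\E_{j,N}(\AA,G)$ is a subquotient of $\E_{\chi_0,N}(\AA,G)$ and subquotients of pure modules stay pure; so no new free-algebra argument is needed. Your Lie-algebra transfer via $G_j\cap G_{\chi_0,N}$ is also what the paper's "proof of inequalities involving $a_n$ are similar" tacitly invokes. One small refinement: your lower index $\lceil N/q\rceil=n+1$ is sharper than the paper's $n$, but this only strengthens the stated inequality.
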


\begin{proof}
Observe first that the~$\AA$-Lie algebras~$\Ll_{\chi_0}(\AA, G)$,~$\Ll(\AA,G)$,~$\E_{\chi_0}(\AA,G)$ and~$\E(\AA,G)$ are generated by~$\{X_j^\chi\}$. We only check inequalities involving~$c_n$ (proof of inequalities involving~$a_n$ are simlar).

Let us prove inequalities \eqref{comp21}. 
\\Take~$u$ in~$\E_{\chi_0,qn+i}(\AA,G)$. Since~$u$ is a sum of monomials~$u_l$ in~$\E_{\chi_0,qn+i}(\AA,G)$, we can assume that~$u$ is a monomial. So, let us write~$u=X^{\chi_0^{i_1}}_{j_1}\dots X^{\chi_0^{i_{r_u}}}_{j_{r_u}}$, where~$i_1+\dots+i_{r_u}=qn+i$. Consequently for every~$\delta \in \Delta$, 
\begin{equation*}
\begin{aligned}
\delta(u)& = {\chi_0}^{i_1}(\delta)X^{\chi_0^{i_1}}_{j_1}\dots {\chi_0}^{i_{r_u}}(\delta)X^{\chi_0^{i_{r_u}}}_{j_{r_u}}\quad \text{thus} & 
\\\delta(u)& = {\chi_0}^{i_1+\dots+i_{r_u}}(\delta) X^{\chi_0^{i_1}}_{j_1}\dots X^{\chi_0^{i_{r_u}}}_{j_{r_u}}=\chi_0^i(\delta)u.
\end{aligned}
\end{equation*}
Therefore~$u\in \E_{r_u}(\AA,G)\lbrack \chi_0^i \rbrack$. To conclude, we need to estimate~$r_u$. 
\begin{itemize}
\item[$\bullet$] If~$i_l=1$ for all~$l$, then~$r_u=qn+i$.
\item[$\bullet$] If~$i_l=q$ for all~$l$, then~$qr_u= qn+i$. Therefore,~$r_u\geq n$.
\end{itemize}
In any case:~$$n\leq r_u\leq qn+i.$$

Let us now prove inequalities \eqref{comp22}. 
\\Take~$u\in \E_n(\AA,G)\lbrack \chi \rbrack$. Since~$u$ is a sum of monomials, we can again assume that~$u$ is a monomial. Then by Lemma \eqref{delact}, we write~$u=X^{\chi_0^{i_1}}_{j_1}\dots X^{\chi_0^{i_n}}_{j_n}$, with~$i_1+\dots+i_n=kq+\psi_{\chi_0}(\chi)$ for some~$k$. Let us see which values can take~$k$:
\begin{itemize}
\item[$\bullet$] if each~$i_l=1$, one obtains~$kq+\psi_{\chi_0}(\chi)=n$, and so~$k\geq \lceil\frac{n-\psi_{\chi_0}(\chi)}{q}\rceil,$
\item[$\bullet$] if each~$i_l=q$, one obtains~$qn=kq+\psi_{\chi_0}(\chi)$, and so~$k\leq \lfloor \frac{qn-\psi_{\chi_0}(\chi)}{q}\rfloor$.
\end{itemize}
In any case:~$$\lceil{\frac{n-\psi_{\chi_0}(\chi)}{q}} \rceil \leq k \leq \lfloor{n-\psi_{\chi_0}(\chi)/q} \rfloor.$$
\end{proof}

\begin{rema}
Proposition \ref{comp2} was also given and proved by Anick: Proof of \cite[Theorem~$3$]{AN2}.
\end{rema}

\subsection{Some results on the series~$\log(gocha_{\chi_0}(\AA,t))$} 
In this subpart, we obtain information on~$(a_{\chi_0, n}(\AA))_{n\in \NN}$. For this purpose, we study the sequence~$(b_{\chi_0, n}(\AA))_{n\in \NN}$ namely defined by:
$$\log(gocha_{\chi_0}(\AA, t)):= \sum_{n\in \NN} b_{\chi_0, n} t^n.$$

\begin{theo}
The following equality holds in~$\NN\lbrack \lbrack t \rbrack \rbrack$:
\begin{equation*}
\begin{aligned}
gocha_{\chi_0}(\AA,t)=\prod_n P(\AA,t^n)^{a_{\chi_0, n}}, 
\\ \text{where} \quad P(\F_p,t):=\frac{1-t^{p}}{1-t}, \quad \text{and} \quad P(\Z_p,t):=\frac{1}{1-t}.
\end{aligned}
\end{equation*}
\end{theo}
 
\begin{proof}
By Lemma \ref{reslie},~$\E_{\chi_0}(\AA, G)$ is a graded locally finite,~$\AA$-universal Lie algebra of~$\Ll_{\chi_0}(\AA, G)$.  
\cite[Corollary~$2.2$]{RiSha} allows us to conclude the case~$\AA:=\F_p$, and \cite[Proposition~$2.5$]{Labute} allows us to conclude the case~$\AA:=\Z_p$.
\end{proof}

\begin{coro}\label{Delmin}
Let us write~$n=mp^k$, with~$(m,p)=1$, then:
\begin{equation*}
\begin{aligned}
a_{\chi_0,n}(\F_p)=\sum_{r=1}^k w_{\chi_0,mp^r}(\F_p), \quad \text{and} \quad a_{\chi_0, n}(\Z_p)=w_{\chi_0,n}(\Z_p); 
\\ \text{where} \quad w_{\chi_0,n}:=\frac{1}{n}\sum_{m|n} \mu(n/m)b_{\chi_0, m}.
\end{aligned}
\end{equation*}
\end{coro}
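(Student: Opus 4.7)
The plan is to deduce Corollary~\ref{Delmin} in essentially the same way as Proposition~\ref{minfin}, but without the character-dependent bookkeeping. The starting point is the product expansion of $gocha_{\chi_0}(\AA, t)$ given in the preceding theorem; by Lemma~\ref{delact}, the $\chi_0$-filtration already encodes eigenspace information through the grading $n \pmod q$, so no extra $\chi$-variable appears in the Euler factors and the analysis reduces to the single-character case of Formula~\eqref{ap}.

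First I take the formal logarithm of the product formula. Using $\log P(\Z_p, t^n) = \sum_{\nu \geq 1} t^{n\nu}/\nu$ in the $\Z_p$ case, and $\log P(\F_p, t^n) = \sum_{\nu \geq 1} t^{n\nu}/\nu - \sum_{\nu \geq 1} t^{np\nu}/\nu$ in the $\F_p$ case, I compare coefficients of $t^N$. This yields the two identities
$$N b_{\chi_0, N}(\Z_p) = \sum_{n \mid N} n \, a_{\chi_0, n}(\Z_p), \qquad N b_{\chi_0, N}(\F_p) = \sum_{n \mid N} n \, a_{\chi_0, n}(\F_p) - \sum_{np \mid N} n \, a_{\chi_0, n}(\F_p),$$
which are the $\chi_0$-filtration analogues of Proposition~\ref{bw}. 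For the $\Z_p$ case, standard Möbius inversion applied to the first identity immediately gives $a_{\chi_0, n}(\Z_p) = w_{\chi_0, n}(\Z_p)$.

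For the $\F_p$ case, I argue by induction on the $p$-adic valuation $k$ of $N = mp^k$ with $(m,p) = 1$. When $k = 0$ the correction sum on the right is empty and Möbius inversion again yields $a_{\chi_0, m}(\F_p) = w_{\chi_0, m}(\F_p)$. For $k \geq 1$, I isolate the $n = N$ term on the right of the $\F_p$ identity, inject the inductive hypothesis applied to $N/p$, and apply Möbius inversion to the remaining divisor sums; the correction term $-\sum_{np \mid N} n a_{\chi_0, n}$ exactly produces the telescoping recurrence $a_{\chi_0, N}(\F_p) = a_{\chi_0, N/p}(\F_p) + w_{\chi_0, N}(\F_p)$, which iterates down to the stated sum.

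The main obstacle is the inductive step in the $\F_p$ case; however, it is formally identical to the argument already carried out for Proposition~\ref{minfin}, with only the character indexing stripped away. The only delicate bookkeeping is tracking the interplay between $\sum_{n \mid N}$ and $\sum_{np \mid N}$ so that both Möbius inversions apply cleanly—nothing essentially new needs to be introduced.
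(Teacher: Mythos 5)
Your proof is correct and follows essentially the same route as the paper, which simply refers to the proof of Theorem~2.9 of Min\'a\v c--Rogelstad--T\^an (i.e.\ the argument of Proposition~\ref{minfin}): take the logarithm of the product formula for $gocha_{\chi_0}(\AA,t)$, derive the $\chi_0$-analogue of Proposition~\ref{bw}, apply M\"obius inversion in the $\Z_p$ case, and induct on the $p$-adic valuation in the $\F_p$ case. One small transcription slip: in your $\F_p$ identity the correction sum should read $-\sum_{np\mid N} np\, a_{\chi_0,n}(\F_p)$ rather than $-\sum_{np\mid N} n\, a_{\chi_0,n}(\F_p)$ (as in Proposition~\ref{bw}), and with that fix the telescoping recurrence $a_{\chi_0,N}=a_{\chi_0,N/p}+w_{\chi_0,N}$ goes through exactly as you describe.
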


\begin{proof}
This proof is similar to the proof of \cite[Theorem~$2.9$]{Minac}.
\end{proof} 

\begin{coro}\label{cond}
The following assertions hold:
\begin{enumerate}[\quad (i)]
\item
If~$\chi$ is a non-trivial irreducible character, and there exists an infinite family of primes~$q_i\equiv \psi_{\chi_0}(\chi) \pmod q$ such that 
\begin{equation*}\label{chi-cond}
b_{\chi_0,q_i}>b_{\chi_0,1},
\end{equation*}
then~$\Ll(\AA, G)\lbrack \chi \rbrack$ is infinite dimensional.
\item 
If there exists an infinite family of primes~$(l_m)_m$ such that:
\begin{equation*}\label{1-cond}
b_{\chi_0,ql_m}\geq qb_{\chi_0,q}+l_mb_{\chi_0,l_m},
\end{equation*}
then~$\Ll(\AA, G)\lbrack \mathds{1} \rbrack$ is infinite dimensional.
\end{enumerate}
\end{coro}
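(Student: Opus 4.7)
The unifying strategy is to find, under each hypothesis, infinitely many integers~$n\to\infty$ with $n\equiv\psi_{\chi_0}(\chi)\pmod q$ and $a_{\chi_0,n}(\AA)\geq 1$. Once this is achieved, inequality~\eqref{comp21} of Proposition~\ref{comp2} produces, for each such $n=qk+\psi_{\chi_0}(\chi)$, a degree $j\in[k,n]$ with $a_j^\chi(\AA)\geq 1$; since the lower endpoint $k=(n-\psi_{\chi_0}(\chi))/q$ tends to $\infty$, these degrees $j$ form an unbounded set, whence $\Ll(\AA,G)[\chi]$ is infinite dimensional.

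For part~(i), I would discard the at most one index with $q_i=p$, so that the remaining primes $q_i$ are coprime to $p$; both formulas of Corollary~\ref{Delmin} then collapse to $a_{\chi_0,q_i}(\AA)=w_{\chi_0,q_i}(\AA)$, which after unfolding the Möbius sum over the two divisors of $q_i$ equals $b_{\chi_0,q_i}-b_{\chi_0,1}/q_i$. Since $gocha_{\chi_0}(\AA,t)=1+c_{\chi_0,1}t+O(t^2)$, one has $b_{\chi_0,1}=c_{\chi_0,1}\geq 0$, and the hypothesis $b_{\chi_0,q_i}>b_{\chi_0,1}$ forces $w_{\chi_0,q_i}>b_{\chi_0,1}(1-1/q_i)\geq 0$, hence $a_{\chi_0,q_i}\geq 1$ by integrality.

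For part~(ii), I would discard the finitely many $l_m\in\{p,q\}$ and set $n=ql_m$, which is coprime to $p$; summing over its four divisors yields
$$ql_m\cdot a_{\chi_0,ql_m}(\AA)=ql_m\,b_{\chi_0,ql_m}-q\,b_{\chi_0,q}-l_m\,b_{\chi_0,l_m}+b_{\chi_0,1}.$$
Inverting the defining Möbius relation for $w_{\chi_0,\cdot}$ gives $s\,b_{\chi_0,s}=\sum_{d\mid s}d\cdot w_{\chi_0,d}$; for a prime $s$ coprime to $p$ this reduces to $s\,b_{\chi_0,s}=a_{\chi_0,1}+s\cdot a_{\chi_0,s}\geq 0$, so $b_{\chi_0,q},b_{\chi_0,l_m}\geq 0$. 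Multiplying the hypothesis $b_{\chi_0,ql_m}\geq q\,b_{\chi_0,q}+l_m\,b_{\chi_0,l_m}$ by $ql_m\geq 1$ gives $ql_m\,b_{\chi_0,ql_m}\geq q\,b_{\chi_0,q}+l_m\,b_{\chi_0,l_m}$; combined with $b_{\chi_0,1}\geq 0$, the right-hand side of the displayed equation is positive, so $a_{\chi_0,ql_m}\geq 1$.

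The main subtlety is the positivity of the intermediate coefficients $b_{\chi_0,q}$ and $b_{\chi_0,l_m}$, which is not visible from the series $\log(gocha_{\chi_0})$ itself but follows by back-substitution via the Möbius relation; uniform handling of both $\AA=\F_p$ and $\AA=\Z_p$ then reduces to the observation that the two formulas of Corollary~\ref{Delmin} coincide on integers coprime to $p$, which are all that is required after discarding the finitely many bad primes.
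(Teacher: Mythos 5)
Your approach matches the paper's: the paper's proof is a one-liner ("This is a consequence of Corollary~\ref{Delmin}"), and you correctly identify the intended unpacking, namely to (a) use Corollary~\ref{Delmin} (with the Möbius formula $w_{\chi_0,n}=\frac1n\sum_{m\mid n}\mu(n/m)\,m\,b_{\chi_0,m}$, implicit in the paper) to produce infinitely many $n\equiv\psi_{\chi_0}(\chi)\pmod q$ with $a_{\chi_0,n}\geq 1$, and then (b) feed these into inequality~\eqref{comp21} of Proposition~\ref{comp2} to extract unboundedly many positive $a_j^\chi$. The reduction to $(n,p)=1$, the observation that $b_{\chi_0,1}=c_{\chi_0,1}\geq 0$, and the back-substitution via $s\,b_{\chi_0,s}=a_{\chi_0,1}+s\,a_{\chi_0,s}\geq 0$ to get $b_{\chi_0,q},b_{\chi_0,l_m}\geq 0$ are all the right details, and part~(i) is airtight.

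There is, however, a genuine gap in your treatment of part~(ii), at the very last step. Your chain of bounds only establishes
\[
q l_m\, a_{\chi_0,ql_m}\;=\;q l_m\, b_{\chi_0,ql_m}-q\,b_{\chi_0,q}-l_m\,b_{\chi_0,l_m}+b_{\chi_0,1}\;\geq\; b_{\chi_0,1}\;\geq\;0,
\]
which gives $a_{\chi_0,ql_m}\geq 0$, not $\geq 1$: your phrase ``the right-hand side of the displayed equation is positive'' is not justified, only nonnegativity is. If $b_{\chi_0,1}=b_{\chi_0,q}=b_{\chi_0,l_m}=0$ and equality holds in the hypothesis, one gets $a_{\chi_0,ql_m}=0$ and the conclusion does not follow. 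In fairness, this defect is inherited from the statement of the corollary itself, which uses a weak inequality $\geq$; in the only place where item~(ii) is invoked (the proof of Theorem~\ref{nonzero cond}), the exponential growth $b_{\chi_0,n}\sim C\alpha^n/n$ yields strict positivity of $a_{\chi_0,ql_m}$ for large $l_m$, so the intended use is safe. You should either strengthen the hypothesis in~(ii) to a strict inequality (or require $b_{\chi_0,1}>0$), or be explicit that you only obtain $a_{\chi_0,ql_m}\geq 0$ in the degenerate case and that the strict bound is what is actually available in applications.
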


\begin{proof}
This is a consequence of Corollary \ref{Delmin}.
\end{proof}

\begin{theo}\label{nonzero cond}
Assume there exist~$\alpha>1$ and a constant~$C\neq 0$ such that~$b_{\chi_0, n}\underset{n\to \infty}{\sim} C \alpha^n/n$.
Then every eigenspace of~$\Ll(\AA, G)$ is infinite dimensional.
\end{theo}

\begin{proof}
By Corollary \ref{cond}, we have: 
\begin{equation*}
\begin{aligned}
a_{\chi_0, q_i}=b_{\chi_0,q_i}-b_{\chi_0,1}/q_i, \quad \text{ and }  
\\a_{\chi_0, ql_m}=b_{\chi_0,ql_m}- qb_{\chi_0,q}-l_mb_{\chi_0,l_m}.
\end{aligned}
\end{equation*}

Since,~$b_{\chi_0,n}\underset{n\to \infty}{\sim}C\alpha^n/n$, we can find families of primes~$\{q_i\}_i$ and~$\{l_m\}_m$ where~$q_i$ and~$l_m$ are sufficiently big, such that:
$a_{\chi_0, q_i}>0$, and~$a_{\chi_0, ql_m}>0$.
Therefore by inequalities \eqref{comp21}, we extract an infinite subsequence of~$(a_n^{\chi})_n$ which is strictly positive. 
\end{proof}

\section{Examples}\label{expart}
Recall that~$1\to R \to F \to G \to 1$ denotes a minimal presentation of~$G$, and by \cite[Lemma~$2.15$]{hajir2019prime}, the group~$\Delta$ lifts to a subgroup of~$Aut(F)$. Keep in mind that~$\Ll(\AA,G)$ and~$\Ll_{\chi_0}(\AA,G)$ are assumed to be torsion-free over~$\AA$. Additionally here,~$G$ is assumed finitely presented, with cohomological dimension less than or equal to~$2$.

Consider the following~$\AA\lbrack \Delta\rbrack$-modules: 
$$R(\F_p):=R/R^p\lbrack R;F\rbrack,\quad \text{and}\quad R(\Z_p):=R/\lbrack R;F\rbrack.$$

Choose~$\chi_0$ a non-trivial element of~${\rm Irr}(\Delta)$. For every~$\chi \in {\rm Irr}(\Delta)$, we fix~$\{l^{\chi}_{j}\}_{1\leq j\leq r^{\chi}}$, where~$r^\chi:=\rk_\AA R(\AA)$, a lifting in ~$F$ of a basis of~$R(\AA)\lbrack\chi\rbrack$. By \cite[Corollaire~$3$, Proposition~$42$, Chapitre~$14$]{Ser}, these liftings do not depend on~$\AA$.

Recall that we defined, using the Magnus isomorphism~$\phi_\AA$ given by \eqref{Magnus iso}, the filtered algebras~$E(\AA, G)$ (in Notations) and~$E_{\chi_0}(\AA, G)$ (in Subpart \ref{DfilLamb}).
\\Name~$n^{\chi}_{j}$ (resp.~$n^{\chi}_{\chi_0, j}$) the least integer~$n$ such that~$\phi_\AA(l^{\chi}_{j}-1)$ is in~$E_n(\AA)\setminus E_{n+1}(\AA)$ (resp.~$E_{\chi_0, n}(\AA)\setminus E_{\chi_0, n+1}(\AA)$): this is the degree of~$l^{\chi}_{j}$ in~$E(\AA)$ (resp.~$E_{\chi_0}(\AA)$). We show in Lemma \ref{eulcomp} that these degrees do not depend on~$\AA$. Set the series:
\begin{equation*}
\begin{aligned}
\chi_{eul}(\AA,t) :=1-dt+\sum_{\chi;1\leq j \leq r^{\chi}} t^{n^{\chi}_{j}}, \quad 
\\ \chi_{eul}^\ast(\AA,t):=1-\sum_{\chi} d^{\chi}\chi.t+\sum_{\chi;1\leq j \leq r^{\chi}}\chi.t^{n^{\chi}_{j}},
\\ \chi_{eul,\chi_0}(\AA,t) :=1-\sum_{\chi} d^{\chi} t^{\psi_{\chi_0}(\chi)}+ \sum_{\chi;1\leq j \leq r^{\chi}} t^{n^{\chi}_{\chi_0, j}}.
\end{aligned}
\end{equation*}

\subsection{Generalities}
We give some generalities on groups of cohomological dimension less than or equal to~$2$.
\subsubsection{Computation of some~$gocha$ series}
Let us first recall the Lyndon's resolution, which allows us to compute~$gocha$ series as inverses of polynomials of the form~$\chi_{eul}$. A general reference is the article of Brumer \cite{BRU}. 

\begin{theo}\label{Lyn}
There exists a filtered~$E(\AA, G)$-module~$M(\AA)$, such that we have the following exact sequence of filtered~$E(\AA, G)$-modules:
\begin{multline*}
0\to M(\AA) \to  \bigoplus_{\chi;1\leq j \leq r^{\chi}} (\phi_\AA(l^{\chi}_j-1)) E(\AA, G) \to 
\\ \bigoplus_{\chi;1\leq j \leq d^{\chi}} (\phi_\AA(x^{\chi}_j-1))E(\AA, G) \to E(\AA, G) \to \AA \to 0.
\end{multline*}
And the cohomological dimension of~$G$ is less than or equal to two if and only if~$M(\AA)=0$.

There exists a filtered~$E_{\chi_0}(\AA, G)$-module~$M_{\chi_0}(\AA)$, such that we have the following exact sequence of filtered~$E_{\chi_0}(\AA, G)$-modules:
\begin{multline*}
0\to M_{\chi_0}(\AA) \to  \bigoplus_{\chi;1\leq j \leq r^{\chi}} (\phi_\AA(l^{\chi}_j-1)) E_{\chi_0}(\AA, G) \to 
\\ \bigoplus_{\chi;1\leq j \leq d^{\chi}} (\phi_\AA(x^{\chi}_j-1))E_{\chi_0}(\AA, G) \to E_{\chi_0}(\AA, G) \to \AA \to 0.
\end{multline*}
And the cohomological dimension of~$G$ is less than or equal to two if and only if~$M_{\chi_0}(\AA)=0$.
\end{theo}

\begin{rema}
Theorem \ref{Lyn} is true for every filtration over~$Al(\AA, G)$.
\end{rema}

\begin{proof}
Let us define the following $\AA$-modules:
$$K(\F_p):=R/R^p[R;R], \quad \text{and} \quad K(\Z_p):=R/[R;R].$$
Notice that $Al(\AA,G)$ acts on $K(\AA)$ via conjugation (see \cite[Part $7.3$]{Koch}). 

By \cite[Sequence $(5.2.2)$]{BRU}, we have the following sequence of~$Al(\AA, G)$-pseudocompact-modules:
$$0\to K(\AA) \to \bigoplus_{\chi,j} \phi_\AA(x_j^{\chi}-1)Al(\AA, G) \to Al(\AA, G) \to \AA \to 0.$$ 
By \cite[Theorem~$7.7$]{Koch}, there exists a~$Al(\AA, G)$-pseudocompact-module~$K'(\AA)$ such that we have the exact sequence:
$$0\to K'(\AA) \to \bigoplus_{\chi, j} \phi_\AA(l_j^{\chi}-1) Al(\AA, G) \to K(\AA) \to 0.$$
Furthermore~$\cd(G)\leq 2$ if and only if~$K'(\AA)=0$.
Therefore, we obtain the following resolutions:
\begin{multline*}
0\to M(\AA) \to  \bigoplus_{\chi;1\leq j \leq r^{\chi}} (\phi_\AA(l^{\chi}_j-1)) E(\AA, G) \to 
\\ \bigoplus_{\chi;1\leq j \leq d^{\chi}} (\phi_\AA(x^{\chi}_j-1))E(\AA, G) \to E(\AA, G) \to \AA \to 0,
\end{multline*}
where~$M(\AA)$ is the set~$K'(\AA)$ endowed with its structure of filtered~$E(\AA, G)$-module, and
\begin{multline*}
0\to M_{\chi_0}(\AA) \to  \bigoplus_{\chi;1\leq j \leq r^{\chi}} (\phi_\AA(l^{\chi}_j-1)) E_{\chi_0}(\AA, G) \to 
\\ \bigoplus_{\chi;1\leq j \leq d^{\chi}} (\phi_\AA(x^{\chi}_j-1))E_{\chi_0}(\AA, G) \to E_{\chi_0}(\AA, G) \to \AA \to 0,
\end{multline*}
where~$M_{\chi_0}(\AA)$ is the set~$K'(\AA)$ endowed with its structure of filtered~$E_{\chi_0}(\AA, G)$-module. 

Finally,~$\cd(G)\leq 2 \iff K'(\AA)=0 \iff M(\AA)=0 \iff M_{\chi_0}(\AA)=0.$ 
\end{proof}

Let us now compute~$gocha$ series:

\begin{prop} \label{key}
We have the following equivalences:
\begin{multline*}
\cd(G)\leq 2 \iff gocha(\AA,t)=\frac{1}{\chi_{eul}(\AA,t)} \iff 
\\ gocha^\ast(\AA,t)=\frac{1}{\chi_{eul}^\ast(\AA, t)} \iff gocha_{\chi_0}(\AA,t)=\frac{1}{\chi_{eul,\chi_0}(\AA,t)}.
\end{multline*}
\end{prop}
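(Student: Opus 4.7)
The plan is to deduce each of the three Hilbert series identities from the Lyndon resolution of Theorem \ref{Lyn} by passing to the associated graded and comparing equivariant Hilbert series. The three equivalences have parallel structure: the case of $gocha^\ast$ uses the natural grading on $\E(\AA,G)$; the case of $gocha_{\chi_0}$ uses the $\chi_0$-grading on $\E_{\chi_0}(\AA,G)$; and the case of $gocha$ is obtained by forgetting the $\Delta$-action in the first. I will therefore concentrate on $gocha^\ast$, the remaining two being formally identical.

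First I would apply the associated graded functor to the filtered exact sequence of Theorem \ref{Lyn}. Using Corollary \ref{proofFormmin} (torsion-freeness of $\E(\AA,G)$), each summand $(\phi_\AA(l^\chi_j - 1))E(\AA,G)$ becomes, at the graded level, a shifted twisted copy of $\E(\AA,G)$: shifted by $n^\chi_j$ and carrying the character $\chi$. Likewise, each summand $(\phi_\AA(x^\chi_j - 1))E(\AA,G)$ becomes a shifted twisted copy with shift $1$ and character $\chi$. The equivariant Hilbert series is additive on short exact sequences of graded locally finite $\AA\lbrack \Delta \rbrack$-modules, and multiplies by $\chi \cdot t^n$ under a shift by $n$ together with a twist by $\chi$. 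Combining these facts, the alternating Euler identity for the resolution reads
\begin{equation*}
gocha^\ast(\AA,t) \cdot \chi^\ast_{eul}(\AA,t) = 1 + \Phi(\AA,t),
\end{equation*}
where $\Phi(\AA,t)$ denotes the equivariant Hilbert series of $\grad M(\AA)$.

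From this identity both directions of the equivalence follow easily. If $\cd(G)\leq 2$, then Theorem \ref{Lyn} gives $M(\AA)=0$, so $\Phi(\AA,t)=0$ and the desired identity follows. Conversely, if the identity holds, then $\Phi(\AA,t)=0$; since all coefficients of $\Phi$ are non-negative integers, $\grad M(\AA)=0$. Because $M(\AA)$ is a filtered submodule of a free pseudocompact $Al(\AA,G)$-module, its filtration is separated, so $M(\AA)=0$ itself, and Theorem \ref{Lyn} yields $\cd(G)\leq 2$. The same argument, replacing Corollary \ref{proofFormmin} by Lemma \ref{reslie} and $E(\AA,G)$ by $E_{\chi_0}(\AA,G)$, proves the equivalence for $gocha_{\chi_0}$; evaluating characters at the trivial character (or equivalently reading Hilbert series over $\AA$) gives the equivalence for $gocha$.

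The hard part will be to justify that the associated graded functor preserves exactness of the Lyndon resolution, that is, that multiplication by $\phi_\AA(l^\chi_j - 1)$ and $\phi_\AA(x^\chi_j - 1)$ realizes the free summands as shifted twisted copies of $\E(\AA,G)$ (respectively $\E_{\chi_0}(\AA,G)$) without any collapse of degrees. This reduces to showing that these generators have exactly the declared degrees $n^\chi_j$ and $1$ (respectively $n^\chi_{\chi_0,j}$ and $\psi_{\chi_0}(\chi)$) and are non-zero divisors in the graded algebra, a fact that follows from the Poincaré--Birkhoff--Witt identification of $\E(\AA,G)$ and $\E_{\chi_0}(\AA,G)$ as universal enveloping algebras (Theorem \ref{PBW}) together with the standing torsion-freeness assumptions.
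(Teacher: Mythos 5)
Your approach is essentially the same as the paper's: apply the associated graded functor to the Lyndon resolution of Theorem \ref{Lyn}, read off the Euler identity $gocha^\ast(\AA,t)\cdot\chi^\ast_{eul}(\AA,t)=1+\Phi(\AA,t)$ with $\Phi$ the Hilbert series of $\grad M(\AA)$, and conclude both directions from $\cd(G)\leq 2 \iff M(\AA)=0 \iff \grad M(\AA)=0$. Where the paper simply cites Lazard (\cite[Chapitre I, Formule $2.3.8.2$]{LAZ} for exactness of $\grad$, and \cite[Chapitre II, Proposition $3.1.3$]{LAZ} for the cyclic summands becoming shifted twisted copies without degree collapse), you correctly flag this strictness issue as the hard part but leave your PBW-plus-torsion-freeness sketch a little thin; you would still need to verify that the maps in the filtered Lyndon resolution are \emph{strict} so that $\grad$ preserves exactness, and (for the $gocha^\ast$ identity) that the resolution is an exact sequence of $\AA[\Delta]$-modules, a point the paper handles via the explicit choice of the families $\{x_j^\chi\}$ and $\{\rho_j^\chi\}$. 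These are gaps of justification rather than of idea; the route is the intended one.
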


\begin{proof}
One denotes by~$\rho^{\chi}_{j}$ (resp.~$\rho^{\chi}_{\chi_0, j}$) the image of~$\phi_\AA(l^{\chi}_j-1)$ in~$\E_{n^{\chi}_{j}}(\AA)$ (resp.~$\E_{n^{\chi}_{\chi_0, j}}(\AA)$).

By \cite[Chapitre I, Formule~$2.3.8.2$]{LAZ}, the functor~$\grad$ is exact, then we apply \cite[Chapitre II, Proposition~$3.1.3$]{LAZ} and Theorem \ref{Lyn}, to obtain the following exact sequences of graded locally finite modules:
\begin{multline}\label{proofseq}
0\to \grad(M(\AA)) \to \bigoplus_{\chi;j} \rho^{\chi}_{j} \E(\AA,G) \to 
\\ \bigoplus_{\chi;j} X^{\chi}_j\E(\AA,G) \to \E(\AA,G) \to \AA \to 0, \tag{$\star$}
\end{multline}
\begin{multline}\label{proofchiseq}
0\to \grad(M_{\chi_0}(\AA)) \to \bigoplus_{\chi;j} \rho^{\chi}_{\chi_0,j} \E_{\chi_0}(\AA, G) \to 
\\ \bigoplus_{\chi;j} X^{\chi}_j\E_{\chi_0}(\AA, G) \to \E_{\chi_0}(\AA, G) \to \AA \to 0. \tag{$\star \star$}
\end{multline}

From Theorem \ref{Lyn} and sequence \eqref{proofseq}, we infer:
$$\cd(G)\leq 2 \iff M(\AA)=0 \iff \grad(M(\AA))=0 \iff gocha(\AA,t)=\frac{1}{\chi_{eul}(\AA,t)}.$$
Moreover Theorem \ref{Lyn} and sequence \eqref{proofchiseq} give us:
\begin{multline}
\cd(G)\leq 2 \iff M_{\chi_0}(\AA)=0 \iff \grad(M_{\chi_0}(\AA))=0 
\\ \iff gocha_{\chi_0}(\AA,t)=\frac{1}{\chi_{eul,\chi_0}(\AA,t)}.
\end{multline}

From the choice of the families $\{x_j^{\chi}\}$ and $\{\rho_j^\chi\}$, we infer that the sequence \eqref{proofseq} is exact in the category of graded locally finite~$\AA\lbrack \Delta \rbrack$-modules. This allows us to conclude: 
$$\cd(G)\leq 2 \iff gocha^\ast(\AA,t)=\frac{1}{\chi_{eul}^\ast(\AA,t)}.$$
\end{proof}

\subsubsection{Answer to \cite[Question~$2.13$]{Minac}}
Extending and reformulating \cite[Question~$2.13$]{Minac} in our equivariant context, when $G$ is finitely presented and $\cd(G)\leq 2$, we show in this Subsubpart that:
\begin{quote}
\emph{The series~$gocha(\AA,t)$,~$gocha^\ast(\AA,t)$ and~$gocha_{\chi_0}(\AA,t)$ do not depend on the ring~$\AA$?}
\end{quote}

\begin{lemm}\label{eulcomp}
Assume that~$\Ll(\Z_p,G)$ is torsion-free. Then, for every~$j$ and every~$\chi$, the integers~$n_j^{\chi}(\AA)$ do not depend on~$\AA$. Similarly, if~$\Ll_{\chi_0}(\Z_p,G)$ is torsion-free, then the integers~$n_{\chi_0,j}^{\chi}(\AA)$ do not depend on~$\AA$
\end{lemm}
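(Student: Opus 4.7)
The plan is to reduce the comparison of degrees to an equality of equivariant Hilbert series together with a pointwise inequality. The latter comes for free, while the former is where the torsion-free hypothesis enters.

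First, I would observe that the reduction map $Al(\Z_p, F) \twoheadrightarrow Al(\F_p, F)$ sends the augmentation ideal to the augmentation ideal, and hence carries $Al_n(\Z_p, F)$ into $Al_n(\F_p, F)$; the same holds verbatim for the $\chi_0$-filtration (the degree reweighting by $\psi_{\chi_0}$ does not interfere). Since the element $l_j^\chi \in F$ is fixed independently of $\AA$, this inclusion of filtrations on $F$ yields the pointwise inequalities $n_j^\chi(\F_p) \geq n_j^\chi(\Z_p)$ and $n_{\chi_0,j}^\chi(\F_p) \geq n_{\chi_0,j}^\chi(\Z_p)$.

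Next I would establish the series identities $gocha^\ast(\F_p, t) = gocha^\ast(\Z_p, t)$ and $gocha_{\chi_0}(\F_p, t) = gocha_{\chi_0}(\Z_p, t)$. By Corollary \ref{proofFormmin} applied to the torsion-free $\Ll(\Z_p, G)$, the graded algebra $\E(\Z_p, G)$ is torsion-free over $\Z_p$, and likewise $\E_{\chi_0}(\Z_p, G)$ by Lemma \ref{reslie}. A short snake-lemma argument then shows $Al_n(\Z_p, G) \cap p\, Al(\Z_p, G) = p\, Al_n(\Z_p, G)$, which on passing to graded pieces gives $\E(\Z_p, G)/p \simeq \E(\F_p, G)$ as graded modules (and similarly for the $\chi_0$-variant). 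Because $q \mid p-1$ is coprime to $p$, Maschke's theorem applies uniformly over $\Z_p$ and $\F_p$, so this isomorphism splits into $\chi$-eigenspaces and the equivariant Hilbert series coincide.

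Applying Proposition \ref{key}, one deduces $\chi_{eul}^\ast(\F_p, t) = \chi_{eul}^\ast(\Z_p, t)$ and $\chi_{eul,\chi_0}(\F_p, t) = \chi_{eul,\chi_0}(\Z_p, t)$. Extracting the $\chi$-component of the first equality (the $d^\chi t$ terms are already $\AA$-independent) gives $\sum_j t^{n_j^\chi(\F_p)} = \sum_j t^{n_j^\chi(\Z_p)}$ for each $\chi$. For the non-equivariant $\chi_{eul,\chi_0}$, Lemma \ref{delact} shows $n_{\chi_0,j}^\chi(\AA) \equiv \psi_{\chi_0}(\chi) \pmod q$, and since $\psi_{\chi_0}$ is a bijection, contributions from distinct $\chi$ sit in distinct residue classes mod $q$ and can be separated, yielding $\sum_j t^{n_{\chi_0,j}^\chi(\F_p)} = \sum_j t^{n_{\chi_0,j}^\chi(\Z_p)}$ for each $\chi$.

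To conclude, each such polynomial equality forces the equality of multisets of exponents, in particular $\sum_j n_j^\chi(\F_p) = \sum_j n_j^\chi(\Z_p)$. Combined with the pointwise inequalities from the first step, the non-negative summands $n_j^\chi(\F_p) - n_j^\chi(\Z_p)$ sum to zero and must individually vanish; the same final step handles the $\chi_0$-case. The main delicacy lies in the middle step — checking that reduction mod $p$ commutes with both the filtration structure and the $\Delta$-eigenspace decomposition — but this rests on standard properties that hold here thanks to $p$-torsion-freeness of the associated graded and the coprimality $\gcd(q, p) = 1$.
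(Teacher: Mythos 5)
Your proof is correct but takes a genuinely different route from the paper's, and it is worth comparing the two. The paper proves the lemma ``from below'': after the common first step $n_j^\chi(\Z_p)\leq n_j^\chi(\F_p)$, it shows directly that the leading term $\rho_j^\chi(\Z_p)\in\E(\Z_p)$ is not divisible by $p$. This uses the Lyndon resolution via Koch's Theorem $7.7$ (here $\cd(G)\leq 2$ enters) to exhibit $\I(\Z_p,R)/\E_1(\Z_p)\I(\Z_p,R)$ as a free $\E(\Z_p,G)$-module with basis $\{\rho_j^\chi(\Z_p)\}$, and then a contradiction: $\rho_{j_0}^{\chi_0}=pu$ would force $pg_{j_0}^{\chi_0}=1$ in $\E(\Z_p,G)$. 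Torsion-freeness of $\E(\Z_p,G)$ (Corollary \ref{proofFormmin}) is used only to pass from $pu\in\I(\Z_p,R)$ to $u\in\I(\Z_p,R)$; the whole argument is homogeneous and never needs to discuss the filtration topology on the ambient algebra.

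You instead proceed ``from above'': you first establish $\E(\Z_p,G)/p\simeq\E(\F_p,G)$ as graded $\F_p[\Delta]$-modules, which yields $gocha^\ast(\Z_p,t)=gocha^\ast(\F_p,t)$ and $gocha_{\chi_0}(\Z_p,t)=gocha_{\chi_0}(\F_p,t)$, then invoke Proposition \ref{key} to turn this into $\chi_{eul}^\ast(\Z_p,t)=\chi_{eul}^\ast(\F_p,t)$ (and the $\chi_0$-analogue), and finally extract the exponents by comparing $\chi$-components (or residue classes mod $q$ via Lemma \ref{delact}) against the pointwise inequalities. This is logically sound and not circular: Proposition \ref{key} does not depend on Lemma \ref{eulcomp}, and you do not invoke Theorem \ref{MRTsol}. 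An interesting by-product of your route is that your middle step---the gocha series equality---uses only torsion-freeness of $\E(\Z_p,G)$ and \emph{not} $\cd(G)\leq 2$, so it in effect settles \cite[Question 2.13]{Minac} in greater generality than the paper's Theorem \ref{MRTsol} states; the cohomological dimension hypothesis only enters through Proposition \ref{key} when you descend to $\chi_{eul}$. The one point you compress is the ``short snake-lemma argument'' for $Al_n(\Z_p,G)\cap p\,Al(\Z_p,G)=p\,Al_n(\Z_p,G)$: besides torsion-freeness of $\E(\Z_p,G)$, this needs the filtration on $Al(\Z_p,G)$ to be separated (to locate a nonzero $y$ with $py\in Al_n$ in a definite graded piece $\E_m$ with $m<n$). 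Separatedness does hold here (powers of the augmentation ideal lie inside powers of the Jacobson radical of a pseudocompact local ring), but it deserves a sentence; the paper's local argument on the single graded element $\rho_j^\chi$ sidesteps this entirely, which is one reason to prefer it even though it is less structural.
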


\begin{proof}
Let us prove that~$n_j^{\chi}$ does not depend on~$\AA$. Recall that~$n_j^{\chi}(\F_p)$ (resp.~$n_j^{\chi}(\Z_p)$) is the degree of~$l^{\chi}_{j}$ in~$E(\F_p)$ (resp.~$E(\Z_p)$), and~$\rho^{\chi}_{j}(\F_p)$ (resp.~$\rho^{\chi}_{j}(\Z_p)$) denotes the image of~$\phi_{\F_p}(l_j^{\chi}-1)$ in~$\E_{n_j^{\chi}}(\F_p)$ (resp.~$\phi_{\Z_p}(l_j^{\chi}-1)$ in~$\E_{n_j^{\chi}}(\Z_p)$). Notice that we have a filtered surjection:
$$E(\Z_p)\overset{\pmod p}{\to} E(\F_p),\quad  \text{with kernel} \quad pE(\Z_p).$$
Since the choice of the family~$\{l_j^\chi\}_{j,\chi}$ does not depend on $\AA$, we infer that~$\phi_{\Z_p}(l_j^{\chi}-1)\equiv \phi_{\F_p}(l_j^\chi-1)\pmod{p}$.
Therefore,~$n_j^{\chi}(\Z_p)\leq n_j^{\chi}(\F_p)$. 

To show that~$n_j^{\chi}(\Z_p)=n_j^{\chi}(\F_p)$, it is sufficient to show that for every integer $j$, and character $\chi$, we have~$\rho_j^{\chi}(\Z_p)$ not in $p\E(\Z_p)$. 

From \cite[Proposition $4.3$]{FOR}, we infer the following isomorphism of $E(\Z_p,G)$-modules:
$$K(\Z_p):=R/[R;R]\simeq I(\Z_p,R)/E_1(\Z_p)I(\Z_p,R).$$
Since, $G$ is of cohomological dimension $2$, by \cite[Theorem $7.7$]{Koch}, we have $$K(\Z_p)\simeq \prod_{j,\chi}\phi_{\Z_p}(l_j^\chi-1)E(\Z_p,G).$$
Introduce
$$\I_n(\Z_p,R):=I_n(\Z_p,R)/I_{n+1}(\Z_p,R), \quad \text{and} \quad \I(\Z_p,R):=\bigoplus_{n\in \NN}\I_n(\Z_p,R).$$ Then, we observe that 
\begin{multline*}
\grad(E_1(\Z_p)I(\Z_p,R))=\grad(\prod_{i,\chi}X_i^\chi E(\Z_p)I(\Z_p,R))=\bigoplus_{i,\chi}\grad(X_i^\chi I(\Z_p,R))
\\ =\bigoplus_{i,\chi}X_i^\chi \I(\Z_p,R)=\E_1(\Z_p)\I(\Z_p,R).
\end{multline*}
Consequently
$$\grad(K(\Z_p))\simeq \bigoplus_{j,\chi}\rho_j^\chi(\Z_p)\E(\Z_p,G)\simeq \I(\Z_p,R)/\E_1(\Z_p)\I(\Z_p,R).$$

Assume now, by contradiction, that there exists one integer $j_0$ and one character $\chi_0$ such that $\rho_{j_0}^{\chi_0}(\Z_p)$ is in $p\E(\Z_p)$, then there exists $u\in \E(\Z_p)$ such that $\rho_{j_0}^{\chi_0}:=pu$.  Moreover, since $\E(\Z_p,G)$ is torsion-free, we deduce that $u$ is in $\I(\Z_p,R)$.  
Therefore, there exist elements $g_j^\chi$ in~$\E(\Z_p,G)$ such that $u\equiv \sum_{j,\chi} g_j^\chi \rho_j^\chi \pmod{\E_1(\Z_p)\I(\Z_p,R)}$. Consequently:
$$\rho_{j_0}^{\chi_0}:=pu \equiv \sum_{j,\chi} pg_j^\chi \rho_j^\chi \pmod{\E_1(\Z_p)\I(\Z_p,R)}.$$

Since the family $\rho_j^\chi$ is a basis of the free $\E(\Z_p,G)$-module~$\I(\Z_p,R)/\E_1(\Z_p)\I(\Z_p,R)$, we infer $pg_{j_0}^{\chi_0}=1$. This is impossible since $p$ is not invertible in $\E(\Z_p,G)$. 

\end{proof}

\begin{theo}\label{MRTsol}
Assume that~$\Ll(\Z_p,G)$ is torsion-free, then :
$$gocha(\Z_p, t)=gocha(\F_p, t), \quad \text{and} \quad gocha^\ast(\Z_p, t)=gocha^\ast(\F_p, t).$$
Furthermore, if $\Ll_{\chi_0}(\Z_p,G)$ is torsion-free, then
$$gocha_{\chi_0}(\Z_p, t)=gocha_{\chi_0}(\F_p, t).$$
\end{theo}

\begin{proof}
We apply Proposition \ref{key} and Lemma \ref{eulcomp}. 
\end{proof}

\begin{rema}
If we remove the hypothesis that~$Aut(G)$ contains a subgroup~$\Delta$ of order~$q$, then we still have:
$$gocha(\Z_p,t)=gocha(\F_p,t).$$
A criterion to obtain finitely presented groups of cohomological dimension less than or equal to~$2$ is given by \cite{Labute} when~$p$ is odd, and by \cite{Labute-Minac} when~$p=2$.
\end{rema}

\subsubsection{Gocha's series and eigenvalues}
Thanks to Proposition \ref{key}, we can compute~$gocha$ series. Then applying Formulae \eqref{minfor} and \eqref{ap}, we obtain an explicit equation relating coefficients~$a_n$ and~$a_n^{\chi}$. However, the computation of~$b_n$ has complexity~$n$ (more precisely it depends on~$\{c_m\}_{m\leq n}$).

If we consider roots of~$\chi_{eul}$, we infer a formula for~$b_n$ which depends on the arithmetic complexity of~$n$. The following results are mostly adapted in our context from ideas of Labute (\cite[Formula~$(1)$]{LABUTE197016}) and Weigel (\cite[Theorem D]{weigel2015graded}).

Let~$\deg(G)$ be the degree of~$\chi_{eul}$, and~$\lambda_i$ the eigenvalues of~$G$, written as:
$$\chi_{eul}(t):=\prod_{i=1}^{\deg(G)}(1-\lambda_it).$$
One denotes by~$M_n$ the necklace polynomial of degree~$n$:
$$M_n(t):=\sum_{m|n}\mu(n/m)\frac{t^m}{n}.$$

Let us state \cite[Theorem D]{weigel2015graded}:
\begin{theo}
Assume~$(n,q)=1$ and write~$n=mp^k$, with~$(m,p)=1$. Then we infer:
$$a_n(\Z_p)=\sum_{i=1}^n M_n(\lambda_i), \quad a_n(\F_p)=\sum_{i=1}^n\sum_{j=0}^k M_{mp^j}(\lambda_i).$$
\end{theo}

\begin{proof}
Weigel showed in the proof of \cite[Theorem~$3.4$]{weigel2015graded}, that:
$$\sum_{i=1}^n M_n(\lambda_i)=w_n.$$
Then we conclude using Theorem \ref{MRTsol} and Formula \eqref{minfor}.
\end{proof}

Let us adapt this result in an equivariant context. By a choice of a primitive~$q$-th root of unity, we have~$\F_q\subset \F_p^\times\subset \overline{\F_p}$, the algebraic closure of $\F_p$. Consider~$\delta$ a non-trivial element in~$\Delta$, and evaluate~$\chi_{eul}^\ast$ in~$\delta$ by:
$$\chi_{eul}^\ast(\delta)(t):=1-\sum_\chi c_1^{\chi}\chi(\delta)t+\sum_{\chi;1\leq j \leq r^{\chi}} \chi(\delta)t^{n_j^{\chi}}\in \F_p\lbrack t \rbrack \subset \overline{\F_p} \lbrack t \rbrack.$$
Define~$\{\lambda_{\delta,j}\}_{1\leq j \leq \deg(G)}\subset \overline{\F_p}$ the eigenvalues of~$\chi_{eul}^\ast(\delta)(t)$. We introduce~$\Ff(\Delta,\overline{\F_p})$ the~$\overline{\F_p}$-algebra of functions from~$\Delta$ to~$\overline{\F_p}$ and:
$$\eta_j\colon \Delta \to \overline{\F_p}; \quad \delta \mapsto \lambda_{\delta,j} .$$
Therefore, we infer:
$$\chi_{eul}^\ast(t):=\prod_{j=1}^{\deg(G)}(1-\eta_j t)\in \Ff(\Delta,\overline{\F_p})\lbrack t \rbrack.$$
Consequenlty, if we apply the $\log$ function to the previous equality, we obtain:
$$b_m^\ast:=\sum_{\chi \in {\rm Irr}(\Delta)} b_m^\chi \chi=\frac{\eta_1^m+\dots+\eta_{\deg(G)}^m}{m}.$$

Let us define for every $\eta\in \Ff(\Delta,\overline{\F_p})$:
$$M_n^\ast(\eta):=\sum_{m|n}\frac{1}{n}\mu(n/m)\eta^{m,(n/m)},\quad \text{where}\quad \eta^{m,(u)}(\delta)=\eta(\delta^u)^m.$$

\begin{prop}\label{Weigen}
Let us assume~$q$ divides~$p-1$ and $(n,q)=1$. Write~$n=mp^k$, with~$(m,p)=1$, then:
\begin{equation*}
\begin{aligned}
a_n(\Z_p)^\ast:=\sum_\chi a_n^{\chi}(\Z_p)\chi=\sum_{j=1}^{\deg(G)} M_n^{\ast}(\eta_j), \quad \text{and}
\\ a_n(\F_p)^\ast:= \sum_\chi a_n^{\chi}(\F_p)\chi=\sum_{j=1}^{\deg(G)}\sum_{i=0}^k M^\ast_{mp^i}(\eta_j),
\end{aligned}
\end{equation*}
the equality is in the~$\overline{\F_p}$-algebra~$\Ff(\Delta,\overline{\F_p})$.
\end{prop}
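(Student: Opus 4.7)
The plan is to reduce the statement to the identity
\begin{equation*}
b_m^\ast := \sum_{\chi \in {\rm Irr}(\Delta)} b_m^\chi \chi = \frac{\eta_1^m + \cdots + \eta_{\deg(G)}^m}{m}
\end{equation*}
established in the paragraph before the statement (by applying $\log$ to both sides of the factorization $\chi_{eul}^\ast(t) = \prod_j(1-\eta_j t)$, which comes from Proposition \ref{key}), combined with Formula \eqref{ap} expressing $a_n^\chi(\AA)$ in terms of the $b_m^\psi$'s.

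I will first handle the case $\AA = \Z_p$. By Formula \eqref{ap},
\begin{equation*}
a_n(\Z_p)^\ast = \sum_\chi a_n^\chi(\Z_p)\chi = \frac{1}{n} \sum_{m\mid n} \mu(n/m)\, m \sum_\chi b_m^{\chi^{m/n}}(\Z_p)\, \chi.
\end{equation*}
Since $(n,q)=1$, the integer $m/n$ makes sense modulo $q$, and $\chi \mapsto \chi^{m/n}$ is a bijection of ${\rm Irr}(\Delta)$ with inverse $\psi \mapsto \psi^{n/m}$. Reindexing via $\psi = \chi^{m/n}$ yields $\sum_\chi b_m^{\chi^{m/n}}\chi = \sum_\psi b_m^\psi \psi^{n/m}$.

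I then introduce the $\overline{\F_p}$-algebra endomorphism $\tau_u\colon \Ff(\Delta,\overline{\F_p}) \to \Ff(\Delta,\overline{\F_p})$, $f \mapsto (\delta \mapsto f(\delta^u))$, for integers $u$ coprime to $q$. Since $\tau_u$ is a ring homomorphism and $\tau_u(\psi) = \psi^u$ for every character $\psi$, I obtain
\begin{equation*}
\sum_\psi b_m^\psi\, \psi^{n/m} = \tau_{n/m}(b_m^\ast) = \tau_{n/m}\!\left(\frac{\sum_j \eta_j^m}{m}\right) = \frac{1}{m} \sum_{j=1}^{\deg(G)} \tau_{n/m}(\eta_j)^m.
\end{equation*}
The definition $\eta_j^{m,(u)}(\delta) = \eta_j(\delta^u)^m$ gives $\tau_u(\eta_j)^m = \eta_j^{m,(u)}$, so, after cancelling the factor $m$ and rearranging the summations, I get
\begin{equation*}
a_n(\Z_p)^\ast = \frac{1}{n} \sum_{m\mid n} \mu(n/m) \sum_{j=1}^{\deg(G)} \eta_j^{m,(n/m)} = \sum_{j=1}^{\deg(G)} M_n^\ast(\eta_j).
\end{equation*}

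The $\F_p$ case then follows termwise from Formula \eqref{ap}, since $a_n^\chi(\F_p) = \sum_{r=0}^k w_{mp^r}^\chi(\F_p)$ and $q$ is coprime to $p$, so $(mp^r,q)=1$ holds for every $r$; applying the same three-step argument to each $w_{mp^r}^\chi(\F_p)$ in place of $w_n^\chi(\Z_p)$ yields the claimed formula. The main subtlety is the reindexing step together with the homomorphism $\tau_u$: one must interpret exponents of characters modulo $q$ and check that $\tau_u$ genuinely commutes with taking $m$-th powers of $\eta_j$; the hypothesis $(n,q)=1$ is precisely what makes the inversion of $m/n$ modulo $q$ legal, and without it the substitution $\psi = \chi^{m/n}$ would fail to be a bijection.
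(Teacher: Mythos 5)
Your argument is correct and mirrors the paper's proof: the paper also reduces to $b_m^\ast = \frac{1}{m}\sum_j \eta_j^m$, applies the "$\delta \mapsto \delta^u$" substitution (written there as $b_m^{\ast\,1,(n/m)}(\delta):=b_m^\ast(\delta^{n/m})$, which is exactly your $\tau_{n/m}(b_m^\ast)$), performs the same change of character variable $\gamma=\chi^{n/m}$, and then invokes Formula \eqref{ap}. Your explicit framing of $\tau_u$ as an $\overline{\F_p}$-algebra endomorphism is a slightly cleaner way to justify that the operation commutes with $m$-th powers, but the underlying computation is the same.
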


\begin{proof}
Let us remind that $b_n^\ast:=\sum_{\chi}b_n^{\chi}\chi$. After making the following change of variable: $\gamma=\chi^{n/m}$, we observe that for every $\delta$ in $\Delta$, we have
$$b_m^{\ast 1,(n/m)}(\delta):=b_m^\ast(\delta^{n/m})=\sum_\chi b_m^\chi\chi(\delta^{n/m})=\sum_{\chi\in {\rm Irr}(\Delta)} b_m^{\chi}\chi(\delta)^{n/m}=\sum_{\gamma \in {\rm Irr}(\Delta)} b_m^{\gamma^{m/n}}\gamma(\delta).$$
Consequently, $b_m^{\ast 1,(n/m)}=\sum_{\chi}b_m^{\chi^{m/n}}\chi.$
Since $mb_m^\ast=\eta_1^m+\dots+\eta_{\deg(G)}^m,$ we obtain:
$$mb_m^{\ast 1,(m/n)}=\left(\eta_1^m+\dots+\eta_{\deg(G)}^m\right)^{(n/m)}=\eta_1^{m,(n/m)}+\dots+\eta_{\deg(G)}^{m,(n/m)}.$$
Using Formula \eqref{ap}, the conclusion follows.
\end{proof}

\begin{rema}
Filip (\cite[Formula~$(4.8)$]{Fi}) and Stix (\cite[Formula~$(14.16)$]{stix2013rational}) also obtained Proposition \ref{Weigen} for some groups defined by one quadratic relation. They computed explicitely the functions~$\eta_j$.
\end{rema}

\begin{exem}
Let us illustrate Proposition \ref{Weigen}, with Example \ref{mildex}.

When splitting $\chi_{eul}^\ast$ into eigenvalues, we obtain:
$$\chi_{eul}^\ast(t)=(1-\eta_1t)(1-\eta_2t)=1-(\chi_0+\chi_0^2+\chi_0^3)t+\chi_0^3t^3,$$
Moreover, $\eta_1\eta_2=\chi_0^3$ and $\eta_1+\eta_2=\chi_0+\chi_0^2+\chi_0^3$ (as functions). Therefore, if we apply Proposition \ref{Weigen}, we get:
\begin{equation*}
\begin{aligned}
a_2^\ast:=\sum_\chi a_2^\chi=\frac{\eta_1^2+\eta_2^2-\eta_1^{(2)}-\eta_2^{(2)}}{2}=\frac{(\eta_1+\eta_2)^2-2\eta_1\eta_2-(\eta_1+\eta_2)^{(2)}}{2}
\\=\frac{\chi_0^2+\chi_0^4+\chi_0^6+2\chi_0^3+2\chi_0^4+2\chi_0^5-2\chi_0^3-\chi_0^2-\chi_0^4-\chi_0^6}{2}=\chi_0^4+\chi_0^5.
\end{aligned}
\end{equation*}

Let us now compute $a_3^\chi$. For this purpose, we first observe that 
\begin{equation*}
\begin{aligned}
\eta_1^3+\eta_2^3=(\chi_0+\chi_0^2+\chi_0^3)^3-3(\chi_0+\chi_0^2+\chi_0^3)\chi_0^3
\\=\chi_0^9+3\chi_0^8+6\chi_0^7+4\chi_0^6+3\chi_0^5+\chi_0^3.
\end{aligned}
\end{equation*}
Therefore, we have:
$$a_3^\ast:=\sum_\chi a_3^\chi=\frac{\eta_1^3+\eta_2^3-\eta_1^{(3)}-\eta_2^{(3)}}{3}=\frac{\eta_1^3+\eta_2^3-(\eta_1+\eta_2)^{(3)}}{3}=\chi_0^5+\chi_0^6+2\chi_0^7+\chi_0^8.$$
\end{exem}

Let us conclude this subpart by proving Theorem \ref{mildcomp2D} given in our introduction.
\begin{theo}\label{mildcomp2}
Assume that~$\Ll(\AA, G)$ is infinite dimensional and for some~$\chi_0$ that~$L_{\chi_0}(G)$ is reached for a unique eigenvalue~$\lambda_{\chi_0}$ such that:
\begin{enumerate}[\quad (i)]
\item~$\lambda_{\chi_0}$ is real,
\item~$L_{\chi_0}(G)=\lambda_{\chi_0}>1$.
\end{enumerate}
Then every eigenspace of~$\Ll(\AA, G)$ is infinite dimensional.
\end{theo}

\begin{proof}
We study the asymptotic behaviour of~$(b_{\chi_0, n})_{n\in \NN}$. By Proposition \ref{key}, we have: 
$$gocha_{\chi_0}(t):=\frac{1}{\chi_{eul, \chi_0}(t)}.$$
Let us denote by~$\{\lambda_1;\dots;\lambda_u\}$ the real~$\chi_0$-eigenvalues of~$G$ and~$\{\beta_1 e^{i\pm\theta_1};\dots; \beta_v e^{i\pm\theta_v}\}$ the polar forms of non real~$\chi_0$-eigenvalues of~$G$. Without loss of generality, assume that~$\lambda_{\chi_0}:=\lambda_1$. Let us write 
$$\chi_{eul, \chi_0}(t):=\prod_{i=1}^u(1-\lambda_it)\prod_{j=1}^v(1-\beta_je^{i\theta_j}t)(1-\beta_je^{-i\theta_j}t).$$
Then, we obtain:
$$\log(\chi_{eul, \chi_0}(t))=\sum_{n\in \NN} \frac{\sum_{i=1}^u \lambda_i^n+\sum_{j=1}^v \beta_j^n(e^{in\theta_j}+e^{-in\theta_j})}{n}t^n.$$
Thus~$b_{\chi_0, n}\underset{n\to \infty}{\sim} C\lambda_1^n/n$, for some~$C>0$. We conclude by Theorem \ref{nonzero cond}.
\end{proof}

\subsection{Group Theoretical examples} \label{grexa}

\subsubsection{Free pro-$p$ groups}
In this subpart, assume that~$G$ is a free finitely generated pro-$p$ group. Observe that
$\Ll(\Z_p,G)$ and $\Ll_{\chi_0}(\Z_p,G)$ are torsion-free.

\begin{theo}\label{freecomp2}
Assume that~$G$ is a noncommutative free pro-$p$ group, then every eigenspace of~$\Ll(\AA, G)$ is infinite dimensional.
\end{theo}

\begin{proof}
Let us fix a non-trivial character~$\chi_0\in {\rm Irr}(\Delta)$, such that~$d^{\chi_0}\leq d^{\chi}$ for every non-trivial~$\chi$. Then we have~$\chi_{eul,\chi_0}(t):=1-\sum_{i=1}^{q}d^{\chi_0^i}t^i$. Set~$s$ a minimal positive real root of~$\chi_{eul,\chi_0}$. We will show that~$s$ is the unique root of minimal absolute value of~$\chi_{eul,\chi_0}$.

We have:
$$0=1-\sum_{i=1}^qd^{\chi_0^i}s^i\leq 1-d^{\chi_0}s\sum_{i=0}^{q-2}s^i-d^{\mathds{1}}s^q\leq 1-d^{\chi_0}s-d^{\mathds{1}}s^q.$$
Then~$d^{\chi_0}s+d^{\mathds{1}}s^q\leq 1$. Thus~$s\leq \min\{1/d^{\chi_0}; (1/d^{\mathds{1}})^{1/q}\}$, so~$0<s<1$. 

If we denote by~$z$ a complex root (not in~$\rbrack 0; 1\lbrack$) of~$\chi_{eul,\chi_0}$, then we notice by the triangle inequality, that~$\chi_{eul,\chi_0}(|z|)<\chi_{eul,\chi_0}(z)=0$. Therefore~$|z|>s$.

Consequently,~$\chi_{eul,\chi_0}$ admits a unique root~$s$ of minimal absolute value which is in~$\rbrack 0;1 \lbrack$. Therefore, by Theorem \ref{mildcomp2D}, we conclude.
\end{proof}

Let us give some examples.

\begin{exem}
Consider~$\Delta:=\Z/2\Z$, and fix~$\chi_0$ the non-trivial irreducible character of~$\Delta$ over~$\AA$. Assume that~$G$ is a free pro-$p$ group with two generators~$\{x,y\}$, and~$\Delta$ acts on~$G$ by:~$\delta(x)=x$,~$\delta(y)=y^{-1}$. Then following our notations, we have:~$x=x^{\mathds{1}}$, and~$y=x^{\chi_0}$. Observe that~$Al(\AA,G)$ is a free algebra on two variables over~$\AA$.
\\Let us first compute some coefficients~$a_n^{\chi}$, with Formula \eqref{ap}. We have:
\begin{equation*}
gocha^\ast(\AA,t):=\frac{1}{1-(1+{\chi_0}). t},\quad \text{ and } \quad \log(gocha^\ast(\AA,t)):=\sum_n \frac{(1+{\chi_0})^n}{n} t^n.
\end{equation*}
So 
\begin{equation*}
\begin{aligned}
c_{2n}^{\mathds{1}}=c_{2n}^{\chi_0}=2^{2n-1}, \quad & c_{2n+1}^{\chi_0}=c_{2n+1}^{\mathds{1}}=2^{2n},
\\ b_{2n+1}^{\chi_0}=b_{2n+1}^{\chi_0}=\frac{2^{2n}}{2n+1},\quad \text{ and } \quad & b_{2n}^{\mathds{1}}=b_{2n}^{\chi_0}=\frac{2^{2n-1}}{2n}.
\end{aligned}
\end{equation*}
Assume for instance~$p\neq 3$, then one obtains:
\begin{equation*}
a_3^{\chi_0}=\frac{2^2-1}{3}=1, \quad \text{ and } \quad a_3^{\mathds{1}}=1.
\end{equation*}
\\Observe by Theorem \ref{freecomp2}, that every eigenspace of~$\Ll(\AA, G)$ is infinite.
\end{exem}

\begin{exem}
Again, take~$\Delta:=\Z/2\Z$ and~$\chi_0$ the unique non-trivial~$\AA$-irreducible character of~$\Delta$. Assume~$G$ is free generated by~$\{x^{\chi_0}_{1};\dots;x^{\chi_0}_{d}\}$.

First, we compute some coefficients of~$(c_n^{\chi})_n$ and~$(a_n^{\chi})_n$. 
Observe:
\begin{equation*}
gocha^\ast(\AA,t):=\frac{1}{1-d{\chi_0} t}, \quad \text{ and } \quad gocha_{\chi_0}(\AA,t):=\frac{1}{1-dt}.
\end{equation*}
Then~$c_{2n}^{\mathds{1}}=d^{2n}, \quad c_{2n}^{\chi_0}=0, \quad c_{2n+1}^{\chi_0}=d^{2n+1}, \quad \text{ and } \quad c_{2n+1}^{\mathds{1}}=0$. 
\\Moreover, 
\begin{equation*}
\log(gocha^\ast(\AA,t)):=\sum_n \frac{(d{\chi_0})^n}{n} t^n, \quad \log(gocha_{\chi_0}(\AA,t)):=\sum_{n\in \NN}\frac{d^n}{n}t^n.
\end{equation*}
So,~$b_{2n+1}^{\chi_0}:=d^{2n+1}/(2n+1), \quad b_{2n}^{\chi_0}=0,\quad b_{2n}^{\mathds{1}}=d^{2n}/(2n), \quad \text{and}\quad  b_{2n+1}^{\mathds{1}}=0$.
\\For instance, if we apply Formula \eqref{ap}, one obtains when~$p\neq 3$:
\begin{equation*}
a_3^{\chi_0}=\frac{d^3-d}{3}, \text{ and } a_3^{\mathds{1}}=0.
\end{equation*}
If we apply Proposition \ref{Weigen}, we obtain:
$$a_2^{\chi_0}=0, \quad \text{and} \quad a_2^{\mathds{1}}=\frac{d^2-d}{2}.$$

Observe that~$c_{\chi_0, n}=d^n \quad \text{and} \quad b_{\chi_0, n}:=d^n/n$. Theorem \ref{freecomp2}, allows us to check that every eigenspace of~$\Ll(\AA, G)$ is indeed infinite dimensional.
\end{exem}

\subsubsection{Non-free case}\label{parlyn}

Let us now construct some non-free examples that illustrate Theorem \ref{mildcomp2D}. For this purpose, consider~$\Delta$ a subgroup of~$Aut(F)$. We construct here a finitely presented pro-$p$ quotient~$G$ of~$F$, such that~$\Delta$ induces a subgroup of~$Aut(G)$.

We remind that~$F$ is the free pro-$p$ group generated by~$\{x^{\chi}_j\}_{\chi\in {\rm Irr}(\Delta); 1\leq j\leq d^{\chi}}$ and define~$\Ff$ the free abstract group generated by the family~$\{x^{\chi}_j\}_{\chi;j}$. Assume also that the action of~$\Delta$ is diagonal over~$\{x^{\chi}_j\}$, i.e. for all~$\delta$ in~$\Delta$,~$\delta(x^{\chi}_j)=(x^{\chi}_j)^{\chi(\delta)}$.

\begin{defi}[Comm-family]
The family~$(l_j)_{j\in [\![1;r]\!]}\subset \Ff$ is said to be a comm-family if:~$$l_j:=\prod_{l=1}^{\eta_j} u_{j,\gamma_l}^{\alpha_{j,\gamma_l}}\in F,$$
where~$\gamma_l$ and~$\alpha_{j,\gamma_l}$ are integers, and~$u_{j,\gamma_l}$ is a~$\gamma_l$-th commutator on~$\{x_j^{\chi}\}_{\chi;j}$, i.e.~$u_{j,\gamma_l}:=\lbrack x_1;\dots; x_{\gamma_l}\rbrack$ where~$x_i\in \{x_j^{\chi}\}_{\chi;j}$.
\end{defi}

\begin{prop} \label{com}
Let~$(l_j)_{j\in [\![1;r]\!]}$ be a comm-family, and denote by~$R$ its normal (topological) closure in~$F$. Then for all~$\delta$ in~$\Delta$,~$\delta(R)=R$ thus~$\Delta$ induces a subgroup of~$Aut(F/R)$.
\end{prop}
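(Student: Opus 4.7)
The strategy is to show the one-sided inclusion $\delta(R)\subseteq R$; since $\delta\in\Delta$ has finite order $q$, iterating $q$ times forces $\delta(R)=R$, giving the induced automorphism of $F/R$. Because $R$ is the closed normal subgroup of $F$ generated by the family $(l_j)$ and $\delta$ is a continuous group automorphism of $F$, the image $\delta(R)$ is the closed normal subgroup generated by the $\delta(l_j)$; hence it is enough to prove that $\delta(l_j)\in R$ for every $j$.

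Using that $\delta$ is a group homomorphism, one obtains
\[
\delta(l_j)=\prod_{l=1}^{\eta_j}\delta(u_{j,\gamma_l})^{\alpha_{j,\gamma_l}},\qquad \delta(u_{j,\gamma_l})=[y_1^{c_1},\dots,y_{\gamma_l}^{c_{\gamma_l}}],
\]
where $y_i\in\{x_k^{\chi}\}$ and $c_i:=\chi_i(\delta)\in\mu_q\subset\Z_p^{\times}$ by the diagonal action hypothesis. The proof therefore reduces to the following key commutator-calculus lemma, to be proved by induction on the weight $\gamma$: for any $\gamma$-th commutator $u=[y_1,\dots,y_\gamma]$ on the generators and any units $c_i\in\Z_p^{\times}$, the twisted commutator $[y_1^{c_1},\dots,y_\gamma^{c_\gamma}]$ lies in the closed normal closure $\overline{\langle u\rangle}^{F}\subseteq R$.

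The base case $\gamma=1$ is immediate since $y_1^{c_1}$ belongs to the procyclic closure of $y_1$. For $\gamma=2$ one applies the familiar identity $[y^c,z]=\prod_{i=0}^{c-1}[y,z]^{y^i}$ (initially for positive integer $c$, then extended to $c\in\Z_p$ by pro-$p$ continuity since both sides are continuous functions of $c$), together with its dual in the second argument, to express $[y_1^{c_1},y_2^{c_2}]=\prod_{i,j}[y_1,y_2]^{y_1^iy_2^j}$ as a pro-$p$ product of $F$-conjugates of $u$. For the inductive step, write $u=[v,y_{\gamma+1}]$; the inductive hypothesis gives $\delta(v)$ as a pro-$p$ limit of products of conjugates of $v$, and one then applies the $\gamma=2$ identity to the outer bracket $[\delta(v),y_{\gamma+1}^{c_{\gamma+1}}]$.

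The main obstacle is the bookkeeping in the inductive step: the naive expansion of $[\delta(v),y_{\gamma+1}^{c_{\gamma+1}}]$ produces cross-term commutators of the form $[v,w]$ for various $w\in F$ (not just $w=y_{\gamma+1}$). The cleanest way to control these is to work modulo each term $F_n$ of the lower central series of $F$. In the nilpotent quotient $F/F_n$ the commutator calculus is finite, the cross-terms eventually land in deeper filtration pieces $F_{\gamma+1},F_{\gamma+2},\dots$, and one verifies by induction on $n$ that the residue of $\delta(l_j)$ in $F/F_n$ lies in $RF_n/F_n$. Passing to the inverse limit $F=\varprojlim F/F_n$ and using that $R$ is closed yields $\delta(l_j)\in R$, concluding the proof.
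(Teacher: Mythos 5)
Your reduction to showing $\delta(l_j)\in R$, and your treatment of weights $\gamma\le 2$ via commutator identities combined with a pro-$p$ density argument to pass from integer exponents to $c\in\Z_p$, is exactly the route taken in the paper; the paper's own proof writes out only the weight-$2$, integer-exponent case, so making the continuity step explicit is a minor improvement. The real problem is the inductive step, and the obstacle you label ``bookkeeping'' is in fact fatal for the generality you claim. Your key lemma asserts that $[y_1^{c_1},\dots,y_\gamma^{c_\gamma}]$ lies in the closed normal closure $N$ of $u=[y_1,\dots,y_\gamma]$; this already fails for $\gamma=3$. Take $u=[[y_1,y_2],y_3]$ and $c_1=c$, $c_2=c_3=1$. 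Hall--Petrescu collection gives
\[
[[y_1^{c},y_2],y_3]\;\equiv\; u^{c}\cdot\bigl[[[y_1,y_2],y_1],y_3\bigr]^{\binom{c}{2}} \pmod{F_5},
\]
and in the free Lie algebra the Jacobi identity yields $[[[\xi_1,\xi_2],\xi_1],\xi_3]=[\bar{u},\xi_1]+[[\xi_1,\xi_2],[\xi_1,\xi_3]]$. The second summand is the Lyndon element associated to $1213$, independent of $[\bar{u},\xi_1]$, while the degree-$4$ graded piece of $N$ is spanned by the three elements $[\bar{u},\xi_i]$. So the correction term fails to lie in $NF_5$ as soon as $\binom{c}{2}\ne 0$ in $\Z_p$, i.e.\ whenever $c\ne 0,1$ (and $p\ne 2$). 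Passing to nilpotent quotients $F/F_n$ does not rescue this: the cross-term already survives in $F_4/F_5$, and nothing forces it into $RF_n$.

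There is a second, independent issue: you claim $\overline{\langle u\rangle}^F\subseteq R$, but when $l_j=\prod_l u_{j,\gamma_l}^{\alpha_{j,\gamma_l}}$ is a genuine product of commutators, the individual factors $u_{j,\gamma_l}$ need not lie in $R$, so this inclusion is unjustified (and the statement of the proposition itself is delicate in that generality: already for $l=[x,y][z,w]$ with $\delta$ acting by distinct characters on the two factors, $\delta(l)$ is not congruent to a power of $l$ modulo $F_3$). The case that is safe --- and the only one the paper's proof actually treats, and the only one its examples use --- is when each $l_j$ is a single weight-$2$ commutator: then every cross-term is an iterated commutator $[u,g_1,\dots,g_k]\in R$, and the argument (yours and the paper's) does close.
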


\begin{proof}
First of all, if~$u$ and~$v$ are elements in~$F$, we write~$u^v:=v^{-1}uv$. 
\\Assume~$\lbrack x;y \rbrack \in R$, where~$x$ and~$y$ are elements in~$\{x_j^{\chi}\}_{\chi;j}$. Observe the following identity:
$$1=\lbrack x;yy^{-1}\rbrack = \lbrack x;y^{-1} \rbrack \lbrack x;y \rbrack^{y^{-1}}.$$
Therefore~$\lbrack x;y^{-1}\rbrack$ is in~$R$. Remark also for all integers~$a$:
$$\lbrack x;y^a\rbrack= \lbrack x;y^{a-1}\rbrack \lbrack x;y\rbrack^{y^{a-1}}.$$ 
Thus by induction, we see that for all~$a\in \Z$, the commutator~$\lbrack x;y^a\rbrack$ is in~$R$.
\\Finally, for all integers~$b$, we also have:
$$\lbrack x^b;y \rbrack =\lbrack x;y\rbrack^{x^{b-1}} \lbrack x^{b-1};y\rbrack.$$
We conclude as before that~$\lbrack x^b;y\rbrack \in R$, for all integers~$b$.
\\Then~$\delta(R)=R$, for every~$\delta\in \Delta$.
\end{proof}

\begin{exem}\label{introex}
Here assume~$q$ is an odd prime that divides~$p-1$. Take~$F$ a free pro-$p$ group with three generators: 
$\{x^{\chi_0}_1 ,x^{\chi_0^2}_1,x^{\chi_0^3}_1\}$. Assume also that~$\Delta$ acts diagonally on the previous set. 

Consider~$R$ the closed normal subgroup of~$F$ generated by commutators~$l_1:=\lbrack x^{\chi_0}_1;x^{\chi_0^2}_1\rbrack$ and~$l_2:=\lbrack x^{\chi_0}_1;x^{\chi_0^3}_1\rbrack$. By Proposition \ref{com}, the group~$\Delta$ induces a subgroup of~$Aut(G)$. Since~$G$ is mild (see for instance \cite{FOR}), we have~$\cd(G)=2$ and:
$$gocha_{\chi_0}(\F_p,t)=\frac{1}{{\chi}_{eul, \chi_0}(\F_p, t)}=\frac{1}{1-t-t^2+t^4}.$$
Thus by Theorem \ref{mildcomp2D}, we conclude that every eigenspace of~$\Ll(\F_p, G)$ is infinite dimensional.
\end{exem}

\subsection{FAB quadratic mild examples}

Let~$\K$ be a quadratic imaginary extension over~$\Q$, with class number coprime to~$p$. Denote by~$S:=\{\p_1;\dots;\p_d\}$ a finite set of tame places of~$\K$, i.e. for~$\p\in S$,~$N_{\K/\Q}(\p) \equiv 1 \pmod p$, and assume that~$S$ is stable by~$\Delta$. We define~$\K_S$ the~$p$-maximal unramified extension of~$\K$ outside~$S$. Set~$G:=\Gal(\K_S/\K)$ and~$\Delta:=\Gal(\K/\Q)$. Again, fix~$\chi_0$ the non-trivial character of~$\Delta$ over~$\F_p$. The group~$\Delta$ acts on~$G$, and thanks to Class Field Theory, the group~$G$ has the FAB property: every open subgroup has finite abelianization. 

Write $U_\p$ for the unit group of the completion of $\K$ at the place $\p \in S$. We define the element~$X_\p \in \E_1(\F_p,G)$ as the image, given by Class Field Theory, of a generator of~$U_\p/U_\p^p$. Then (see for instance \cite[Theorem $2.6$]{rougnant2017propagation}), the set $\{X_{\p}\}_{\p\in S}$ is a basis of $\E_1(\F_p,G)$.

Denote by~$x_\p$ an element in~$G$ that lifts~$X_\p$. We introduce~$F$, the free pro-$p$ group generated by~$x_\p$. Koch \cite[Chapter~$11$]{Koch} gave a presentation of $G$, with generators~$\{x_\p\}_{p\in S}$ and relations~$\{l_{\p}\}_{\p\in S}$ verifying:
\begin{equation*} \label{relations} l_{\p_i} \equiv \prod_{j\neq i}[x_{\p_i},x_{\p_j}]^{a_j(i)} \pmod{F_3(\F_p)}, \quad \text{where} \quad a_j(i) \in \Z/p\Z.
\end{equation*}
The element $a_j(i)$ is zero if and only if the prime $\p_i$ splits in $\k^p_{\{\p_j\}}/\k$, where $k^p_{\{\p\}}$ is the (unique) cyclic extension of degree $p$ of $k$ unramified outside $\p$. This is equivalent to $$p_i^{(p_j-1)/p}\equiv 1 \pmod{p_j},$$ 
where $p_i$ is a prime in $\Q$ below $\p_i$.

From now, we assume that this presentation is \textbf{mild and quadratic} (the relations are all of weight~$2$), which means that we have the following isomorphisms of~$\F_p\lbrack \Delta\rbrack$-modules:
$$\E_1(\F_p)= \bigoplus_{i=1}^d X_{\p_i} \F_p, \quad \text{and} \quad R(\F_p)\simeq \bigoplus_{i=1}^d \left(\sum_{j\neq i} a_j(i) \lbrack X_{\p_j}; X_{\p_i} \rbrack\right) \F_p.$$
Denote by~$i$ (resp.~$s$), the number of inert or totally ramified (resp. totally split) primes below~$S$ in~$\Q$, then~$d=r=|S|:=i+2s$. Recall that for every~$\chi$:
$$d^\chi:=\dim_{\F_p} \E_1(\F_p)\lbrack \chi \rbrack,\quad \text{and} \quad r^\chi:=\dim_{\F_p} R(\F_p)\lbrack \chi \rbrack.$$
By \cite[Theorem~$1$]{Gras} and Class Field Theory, we obtain:
$$d^{\mathds{1}}=i+s \quad (\text{resp. } r^{\mathds{1}}=i+s) \quad \text{and} \quad d^{\chi_0}=s \quad (\text{resp. } r^{\chi_0}=s).$$

\begin{prop}\label{numbtheo}
We have the following equalities of series:
\begin{equation*}
\begin{aligned}
gocha^\ast(\F_p,t):=& \frac{1}{1-(i+s+s\chi_0)t+(i+s+s\chi_0)t^2}, 
\\ gocha_{\chi_0}(\F_p,t):=& \frac{1}{1-st-it^2+(s+i)t^4}.
\end{aligned}
\end{equation*}
Consequently, the action of~$\Delta$ on~$G$ is not trivial if and only if at least one place above~$S$ in~$\Q$ totally splits in~$\K$.
\end{prop}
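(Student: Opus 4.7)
The plan is to invoke Proposition~\ref{key}. Since $G$ is mild, $\cd(G) \leq 2$, so that proposition gives $gocha^\ast(\F_p,t) = 1/\chi_{eul}^\ast(\F_p,t)$ and $gocha_{\chi_0}(\F_p,t) = 1/\chi_{eul,\chi_0}(\F_p,t)$. It thus suffices to identify the two Euler polynomials, which amounts to computing the $\Delta$-eigenspace ranks $d^\chi, r^\chi$ of the generators and relations, together with the filtration weights $n_j^\chi$ and $n_{\chi_0, j}^\chi$.

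First I would decompose the generator module $\E_1(\F_p,G) = \bigoplus_{\p \in S} X_\p \F_p$ as a $\Delta$-module using Class Field Theory: $\Delta = \Gal(\K/\Q)$ permutes the generators as it permutes the primes of $\K$, so each inert or totally ramified $\p$ contributes a $\Delta$-fixed vector (character $\mathds{1}$), while each split pair $\{\p, \bar\p\}$ yields $X_\p + X_{\bar\p}$ in the $\mathds{1}$-eigenspace and $X_\p - X_{\bar\p}$ in the $\chi_0$-eigenspace. This gives $d^{\mathds{1}} = i + s$ and $d^{\chi_0} = s$, in agreement with the values cited from \cite{Gras}. Since the relations $\{l_\p\}_{\p\in S}$ transform under $\Delta$ exactly as the primes, $R(\F_p)$ has the same eigenspace dimensions, namely $r^{\mathds{1}} = i + s$ and $r^{\chi_0} = s$. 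The quadratic hypothesis then forces $n_j^\chi = 2$ for every relation, so plugging into the definition of $\chi_{eul}^\ast$ gives $\chi_{eul}^\ast(\F_p,t) = 1 - ((i+s) + s\chi_0) t + ((i+s) + s\chi_0) t^2$, and inversion via Proposition~\ref{key} produces the first stated formula.

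For $\chi_{eul,\chi_0}$, the more delicate step, I would diagonalise the relations in the basis $\{Y^\chi_j\}$ and compute their $\chi_0$-filtration weights. The key observation is a character-versus-weight compatibility in the free algebra $E_{\chi_0}(\F_p)$: $\chi_0$-weight $2$ only supports character $\mathds{1}$ (from single $Y^{\mathds{1}}$ generators and products $Y^{\chi_0}_{\p} Y^{\chi_0}_{\q}$), $\chi_0$-weight $3$ only supports character $\chi_0$ (from commutators of the form $[Y^{\mathds{1}}, Y^{\chi_0}]$), and $\chi_0$-weight $4$ only supports character $\mathds{1}$ (from commutators $[Y^{\mathds{1}}, Y^{\mathds{1}}]$). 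Expanding each $[x_\p, x_\q]$ in the diagonalised basis and imposing the eigenvector condition on each diagonalised relation forces the weight-$3$ part of every $\mathds{1}$-eigenvector relation and the weight-$2$ part of every $\chi_0$-eigenvector relation to vanish. Careful bookkeeping of what survives shows that after diagonalisation the $2s+i$ relations distribute as $s$ at $\chi_0$-weight~$2$ and $s+i$ at $\chi_0$-weight~$4$, with none at weight~$3$. This yields $\chi_{eul,\chi_0}(\F_p,t) = 1 - st - it^2 + (s+i)t^4$, and inversion via Proposition~\ref{key} gives the second formula. The final assertion is then immediate: $\Delta$ acts non-trivially on $G$ iff $d^{\chi_0} = s > 0$, that is, iff some prime below $S$ in $\Q$ splits in $\K$.

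The hard part will be the $\chi_0$-weight distribution at the end: one must verify that the eigenvector cancellations in the diagonalised basis indeed yield the triple $(R_2, R_3, R_4) = (s, 0, s+i)$. This is really a detailed commutator calculation that combines the explicit presentation of \cite{Koch} with the compatibility of the $\chi_0$-filtration with the $\Delta$-action, and it is where one uses the split/inert dichotomy in its strongest form.
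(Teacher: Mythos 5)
Your first step is sound and matches the paper: invoke Proposition~\ref{key}, identify $d^{\mathds{1}}=r^{\mathds{1}}=i+s$ and $d^{\chi_0}=r^{\chi_0}=s$ from the permutation action of $\Delta$ on $\{X_\p\}_{\p\in S}$ and $\{l_\p\}_{\p\in S}$, and use the quadratic hypothesis $n_j^\chi=2$ to produce $\chi_{eul}^\ast$. The paper's own proof does exactly this for $\chi_{eul}^\ast$ and then simply appeals to Proposition~\ref{key} without writing out the $\chi_0$-weight bookkeeping; you try to supply it, which is the right instinct, but the bookkeeping you announce does not hold up.

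The gap is in the claimed weight triple $(R_2,R_3,R_4)=(s,0,s+i)$. You correctly observe (this is Lemma~\ref{delact} with $q=2$) that a $\chi_0$-homogeneous element of $\chi_0$-degree $m$ carries character $\chi_0^m$: even $m$ gives character $\mathds{1}$, odd $m$ gives $\chi_0$. Now among the $d=2s+i$ diagonalised relations, $r^{\chi_0}=s$ of them are $\chi_0$-eigenvectors. Quadraticity forces each to have a nonzero $X$-degree-$2$ leading part, which is again a $\chi_0$-eigenvector in $\E_2(\F_p)$; but the only $\chi_0$-eigenvectors among degree-$2$ monomials are the mixed products $Y^{\mathds{1}}Y^{\chi_0}$ and $Y^{\chi_0}Y^{\mathds{1}}$, all of $\chi_0$-weight $3$. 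So those $s$ relations necessarily have $n_{\chi_0,j}^{\chi_0}=3$, i.e.\ $R_3=s$, not $0$. Meanwhile the $r^{\mathds{1}}=i+s$ relations of character $\mathds{1}$ have $\chi_0$-weight $2$ or $4$, so at most $i+s$ relations can sit at even weights. Your distribution puts all $2s+i$ relations at weights $2$ and $4$, which is impossible once $s>0$; it contradicts the very weight/character compatibility that you (correctly) set up two sentences earlier. The ``careful bookkeeping'' step therefore cannot conclude as stated, and you would have to confront the mandatory $st^3$ contribution from the $\chi_0$-eigenrelations before $\chi_{eul,\chi_0}$ can be identified.
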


\begin{proof}
Here, the relations have all weight~$2$, so:
$$\chi_{eul}^\ast(t):=1-(d^{\mathds{1}}+d^{\chi_0}\chi_0)t+(r^{\mathds{1}}+r^{\chi_0}\chi_0)t^2=1-(i+s+s\chi_0)t+(i+s+s\chi_0)t^2.$$
Since the presentation is mild, we conclude using Proposition \ref{key}.
\end{proof}

\begin{rema}
Before giving examples, let us add some complements.
\begin{itemize}
\item The~$\F_p\lbrack \Delta\rbrack$-module structure of~$\E_1(\F_p)$ (or~$R(\F_p)$) gives us the integers~$i$ and~$s$.
\item If every place~$\p$ above~$S$ is inert or totally ramified in~$\K$, then~$\Gal(\Q_{S}/\Q)$ and~$G:=\Gal(\K_S/\K)$ admit the same number of generators. Then Gras \cite[Theorem~$1$]{Gras}, showed that~$\Gal(\Q_S/\Q)$ and~$G$ are isomorphic, so the action of~$\Delta$ over~$G$ is trivial.
\item Assume now that all places in~$\Q$ below a set of primes~$S$ are totally split in~$\K$. If~$\Gal(\Q_S/\Q)$ is mild, Rougnant in \cite[Théorème~$0.3$]{rougnant2017propagation} gave a criterion to also obtain~$\Gal(\K_S/\K)$ mild.
\end{itemize}

\end{rema}

\begin{exem}\label{aritex}
We give explicit arithmetic examples where~$G$ is mild and defined by quadratic relations:

\begin{enumerate}
\item 
We study the following example given by \cite[Example~$3.2$]{epub559}: let~$p=3$,~$\K:=\Q(i)$, and consider the set of primes: $S:=\{q_1:=229, q_2:=241\}$. These primes totally split in~$\K$. and the places above~$S$ (in $\K$) are given by:
$$S:=\{\p_1:=(2+15i), \overline{\p_1}:=(2-15i), \p_2:=(4+15i), \overline{\p_2}:=(4-15i)\}.$$
The group~$G:=\Gal(\K_S/\K)$ is mild quadratic. Then by Proposition \ref{numbtheo}:
$$gocha^\ast(\F_p,t)=\frac{1}{1-(2+2\chi_0)t+(2+2\chi_0)t^2}, \quad \text{and} \quad gocha_{\chi_0}(\F_p,t)=\frac{1}{1-2t+2t^4}.$$
However, the polynomial~$1-2t+2t^4$ admits only non real roots, so we can not apply Theorem \ref{mildcomp2D}.

Observe by \cite[Example~$11.15$]{Koch}, that the group~$\Gal(\Q_{S}/\Q)$ is finite. 

\item 
\cite[Part~$6$]{rougnant2017propagation}: Take~$p=3$,~$\K:=\Q(\sqrt{-5})$, and~$S:=\{61; 223; 229; 481\}$. The Class group of~$\K$ is~$\Z/2\Z$, the primes in~$S$ are totally split in~$\K$, and the groups~$\Gal(\Q_{S}/\Q)$ and~$G:=\Gal(\K_S/\K)$ are both mild quadratic. Therefore, by Proposition \ref{numbtheo}, we obtain:
$$gocha^\ast(\F_p,t)=\frac{1}{1-(4+4\chi_0)t+4\chi_0 t^2} \quad \text{and} \quad gocha_{\chi_0}(\F_p,t)=\frac{1}{1-4t+4t^4}.$$
By Theorem \ref{mildcomp2D}, the graded spaces~$\Ll(\F_p, G)\lbrack \chi_0 \rbrack$ and~$\Ll(\F_p, G)\lbrack \mathds{1} \rbrack$ are both infinite dimensional.

\item 
We enrich the example given in \cite[Part~$2.1$]{split}: Consider~$p=3$,~$\K:=\Q(\sqrt{-163})$, and~$T:=\{31,19,13,337,7\}$. The class group of~$\K$ is trivial,~$\Gal(\Q_T/\Q)$ is mild, and the primes in~$T$ are inert in~$\K$. Therefore by \cite[Theorem~$1$]{Gras}, the group~$\Gal(\K_T/\K)$ is mild (in fact, it has the same linking coefficients as~$\Gal(\Q_T/\Q)$).

Observe that~$43$ is totally split in~$\K$, so we take~$\{\p_6, \overline{\p_6}\}$ to be the primes in~$\K$ above~$43$. 
Consider now~$S:=T\cup\{\p_6;\overline{\p_6}\}$. 
By \cite[Corollary $4.3$]{epub559}, the group~$G:= \Gal(\K_S/\K)$ is mild quadratic.
Proposition \ref{numbtheo} gives us
$$gocha^\ast(\F_p,t):=\frac{1}{1-(6+\chi_0) t+ (6+\chi_0)t^2}, \quad \text{and} \quad gocha_{\chi_0}(\F_p,t):=\frac{1}{1-t-5t^2+6t^4}.$$
Therefore, by Theorem \ref{mildcomp2D}, the graded spaces~$\Ll(\F_p, G)\lbrack \mathds{1}\rbrack$ and~$\Ll(\F_p, G)\lbrack \chi_0 \rbrack$ are infinite dimensional.
\end{enumerate}

\end{exem}

\section*{Remark on lower~$p$-central series and mild groups}
Assume here that~$G$ is a finitely presented pro-$p$ group, and~$q$ divides~$p-1$. We define the lower~$p$-central series of~$G$ by:
$$G_{\{1\}}:=G,\quad \text{and} \quad G_{\{n+1\}}:=G_{\{n\}}^p\lbrack G_{\{n\}};G\rbrack.$$
Remark that~$\bigoplus_{n\in \NN}(G_{\{n\}}/G_{\{n+1\}})$ is an~$\F_p\lbrack t \rbrack \lbrack \Delta \rbrack$-module, where~$\F_p\lbrack t \rbrack$ is the ring of polynomials over~$\F_p$.

Furthermore, if we assume~$G$ mild (see \cite[Definition~$1.1$]{Labute}), Labute showed in \cite[Part~$4$]{Labute}, that the lower~$p$-central series come from the filtered algebra defined by~$Al(\Z_p,G)$ endowed with the filtration induced by~$\{Al_{\{n\}}(G):=\ker(Al(\Z_p,G)\to \F_p)^n\}_{n\in \NN}$. Additionally, the set~$\bigoplus_{n\in \NN}(G_{\{n\}}/G_{\{n+1\}})$ is a free~$\F_p\lbrack t\rbrack$-module. Since~$G$ is finitely generated, we introduce:
$$a_{\{n\}}^{\chi}:=\rk_{\F_p}(G_{\{n\}}/G_{\{n+1\}})\lbrack \chi \rbrack,\quad \text{and} \quad c_{\{n\}}^{\chi}:=\rk_{\F_p}(Al_{\{n\}}(G)/Al_{\{n+1\}}(G))\lbrack \chi \rbrack.$$

If we replace~$a_n(\Z_p)$ (resp.~$c_n(\Z_p)$) by~$a_{\{n\}}$ (resp.~$c_{\{n\}}$), then the results of this paper can be adapted for lower~$p$-central series. Moreover, extending \cite[Corollary~$2.7$]{Labute} in an equivariant context, we can deduce a relation between the coefficients~$c_n^{\chi}$ and~$a_{\{n\}}^{\chi}$.

\bibliography{bibactfi}
\bibliographystyle{plain}
\end{document}